\documentclass[10pt, a4paper]{article}
\usepackage{mathrsfs, mathtools, amsthm, amssymb, thm-restate}  
\allowdisplaybreaks[4]
\usepackage{paralist, tablists}  

\usepackage{graphicx, xcolor}  
\usepackage{caption}  
\usepackage[labelformat=simple]{subcaption}  

\usepackage[left=25mm, top=25mm, bottom=25mm, right=25mm]{geometry}  
\usepackage[T1]{fontenc} 

\usepackage[inline]{enumitem}  
\usepackage[square, numbers, sort&compress]{natbib}  
\usepackage[
  bookmarks=true,                
  bookmarksnumbered=true,        
  bookmarksopen=true,            
  plainpages=false,              
  colorlinks=true,               
  linkcolor=blue,                
  citecolor=black,               
  anchorcolor=green,             
  urlcolor=blue,                 
  hyperindex=true                
]{hyperref} 

\newtheorem{theorem}{Theorem}[section] 
\newtheorem{lemma}[theorem]{Lemma}

\newtheorem{case}{Case}
\newtheorem{subcase}{Subcase}[case]

\theoremstyle{definition}

\makeatletter
\def\th@plain{%
  \upshape 
}
\makeatother

\newcommand{\ie}{i.e.,\ }

\usepackage{array}
\usepackage{bm}  

\makeatletter
\renewenvironment{proof}[1][\proofname]{\par
  \pushQED{\qed}%
  \normalfont \topsep6\p@\@plus6\p@\relax
  \trivlist
  \item[\hskip\labelsep
        \bfseries
    #1\@addpunct{.}]\ignorespaces
}{%
  \popQED\endtrivlist\@endpefalse
}
\makeatother

\usepackage[capitalise]{cleveref}  
\crefname{claim}{Claim}{Claims}

\begin{document}
\title{On spectral conditions for fractional $k$-extendable graphs}
\author{
    Xiyan Bai\thanks{School of Mathematics and Statistics, Henan University, Kaifeng, 475004, P. R. China.}
    \and
    Tao Wang\thanks{Center for Applied Mathematics, Henan University, Kaifeng, 475004, P. R. China.}
    \and
    Mengke Yang\thanks{School of Mathematics and Statistics, Henan University, Kaifeng, 475004, P. R. China.}
    \and
    Xiaojing Yang\thanks{School of Mathematics and Statistics, Henan University, Kaifeng, 475004, P. R. China.}
}
\date{}
\maketitle

\begin{abstract}
A fractional matching of a graph $G$ is a function $h: E(G) \to [0,1]$ such that $\sum_{e \in E_G(v)} h(e) \leq 1$ for every vertex $v \in V(G)$, where $E_G(v)$ is the set of edges incident to $v$. If $\sum_{e \in E_G(v)} h(e) = 1$ for all $v$, then $h$ is a fractional perfect matching. A graph $G$ is fractional $k$-extendable if it has a matching of size $k$ and every $k$-matching $M$ in $G$ is contained in a fractional perfect matching $h$ such that $h(e)=1$ for every $e \in M$. In this paper, we establish new sufficient conditions for a graph with minimum degree $\delta$ to be fractional $k$-extendable. Our main results provide spectral guarantees for this property based on the distance spectral radius and the signless Laplacian spectral radius.

\textbf{Keywords:} fractional $k$-extendable graph; signless Laplacian spectral radius; distance spectral radius; minimum degree

\textbf{MSC2020: 05C50}
\end{abstract}

\section{Introduction}
Let $G$ be a connected graph. The \emph{spectral radius} $\rho(G)$ is the largest eigenvalue of its adjacency matrix $A(G)$. The \emph{signless Laplacian matrix} of $G$ is defined as $Q(G) = D(G) + A(G)$, where $D(G)$ is the diagonal degree matrix of $G$; its largest eigenvalue, $q(G)$, is the \emph{signless Laplacian spectral radius}. The \emph{distance matrix} $\mathcal{D}(G) = (d_{i,j})$ is defined by letting $d_{i,j}$ be the length of a shortest path between vertices $v_i$ and $v_j$. The eigenvalues of $\mathcal{D}(G)$, denoted by $\mu_1 \geq \mu_2 \geq \dots \geq \mu_n$, are the \emph{distance eigenvalues} of $G$, and the largest one, $\mu_1$, is the \emph{distance spectral radius}, denoted $\mu(G)$. Let $K_n$ denote the complete graph on $n$ vertices. For two vertex-disjoint graphs $G_1$ and $G_2$, their disjoint union is denoted by $G_1 + G_2$, and their \emph{join}, denoted $G_1 \vee G_2$, is the graph obtained from $G_1 + G_2$ by adding every possible edge between $V(G_1)$ and $V(G_2)$.

A set $M \subseteq E(G)$ is a \emph{matching} if no two edges in $M$ share a vertex. A matching of size $k$ is a \emph{$k$-matching}, and a matching that covers all vertices of $G$ is a \emph{perfect matching} (or a \emph{1-factor}). A graph $G$ with at least $2k+2$ vertices is \emph{$k$-extendable} if it contains a $k$-matching and every $k$-matching can be extended to a perfect matching.

A \emph{fractional matching} is a function $h: E(G) \to [0, 1]$ such that $\sum_{e \in E_G(v)} h(e) \leq 1$ for every vertex $v \in V(G)$, where $E_G(v)$ is the set of edges incident with $v$. If $\sum_{e \in E_G(v)} h(e) = 1$ for all $v \in V(G)$, then $h$ is a \emph{fractional perfect matching}. A graph $G$ with at least $2k+2$ vertices is \emph{fractional $k$-extendable} if every $k$-matching $M$ is contained in some fractional perfect matching $h$ with $h(e) = 1$ for all $e \in M$.

The study of (fractional) $k$-extendable graphs has attracted considerable attention. The concept was introduced by Plummer \cite{MR583220}. Ananchuen and Caccetta \cite{MR1452932} established a necessary minimum degree condition and characterized its realizable values. Robertshaw and Woodall \cite{MR1892694} found a sharp lower bound on the binding number ensuring $k$-extendability. Lou and Yu \cite{MR2043926} characterized the largest $k$-extendable graphs of a given order. Cioab\v{a}, Koolen, and Li \cite{MR3621737} proved that all distance-regular graphs with diameter at least $3$ are $2$-extendable and derived improved bounds on extendability for valency $k \geq 3$. Ma and Liu \cite{MR2102027} characterized fractional $k$-extendable graphs. Other related results include work on binding numbers and fractional matchings \cite{MR2347527}, $A_\alpha$-spectral radius conditions \cite{MR4822576}, fractional $n$-factor-critical graphs \cite{MR2345432}, and a spectral radius condition for fractional $k$-extendability \cite{MR5004976}.

\begin{theorem}[\cite{MR4822576}]
Let $\alpha \in [0, 1)$, and let $G$ be a connected graph of order $n$ with $n \geq f(\alpha)$, where
\[
f(\alpha) = \begin{cases}4k+7, &\text{if $0 \leq \alpha \leq \frac{2}{3}$};\\[8pt]
\frac{k + 2}{1 - \alpha} + 3, &\text{if $\frac{2}{3} < \alpha < 1$}.
\end{cases}
\]
If $\rho_\alpha(G) \geq \rho_\alpha(K_{2k} \vee (K_{n - 2k - 1} \cup K_1))$, then $G$ is a fractional $k$-extendable graph unless $G = K_{2k} \vee (K_{n - 2k - 1} \cup K_1)$.
\end{theorem}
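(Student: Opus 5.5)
We argue by contraposition: assume $G$ is connected of order $n \geq f(\alpha)$ and not fractional $k$-extendable, and show $\rho_\alpha(G) \le \rho_\alpha(K_{2k}\vee(K_{n-2k-1}\cup K_1))$, with equality only for that graph. The starting point is the Ma--Liu characterization \cite{MR2102027}. It states that a graph with a $k$-matching is fractional $k$-extendable if and only if
\[
i(G-S) \le |S| - 2k
\]
for every $S\subseteq V(G)$ such that $G[S]$ contains a $k$-matching. Here $i(\cdot)$ counts isolated vertices.

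Failure therefore produces a set $S$ with $|S|=s\ge 2k$, $G[S]$ containing a $k$-matching, and $i(G-S)\ge s-2k+1$. (If $G$ has no $k$-matching at all, then $G$ is very sparse and $\rho_\alpha$ is small, so we handle that case separately by a direct edge-count bound.) Since adding edges strictly increases $\rho_\alpha$ in a connected graph, by Perron--Frobenius monotonicity we may take $G$ to be a spanning subgraph of
\[
G_s := K_s \vee \bigl(K_{n-2s+2k-1} \cup (s-2k+1)K_1\bigr).
\]
This reduces the problem to showing $\rho_\alpha(G_s)\le \rho_\alpha(G_{2k})$ for all $2k\le s\le (n+2k-1)/2$, with equality only when $s=2k$. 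Note that $G_{2k}=K_{2k}\vee(K_{n-2k-1}\cup K_1)$.

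To compare the $G_s$, we use the equitable quotient matrix on the three parts $S$, $K_{n-2s+2k-1}$ and the isolated set. For $A_\alpha=\alpha D+(1-\alpha)A$, this gives a $3\times 3$ matrix whose largest eigenvalue is the largest root of an explicit cubic $\varphi_s(x)$. For the endpoint $s=2k$ we use the crude lower bound $\rho_\alpha(G_{2k})\ge \rho_\alpha(K_{n-1}) = n-2$, since $G_{2k}$ contains $K_{n-1}$. For the other values of $s$ we need an upper bound on $\rho_\alpha(G_s)$ that stays below $n-2$.

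One route is to show $\varphi_s(x)>0$ for all $x\ge n-2$, or more cleanly to compute $\varphi_s(x)-\varphi_{2k}(x)$ and show it has constant sign on $[\rho_\alpha(G_{2k}),\infty)$. This difference factors as $(s-2k)$ times a quadratic in $x$. For the middle range of $s$, a simpler alternative is the bound
\[
\rho_\alpha \le \max_v \bigl(\alpha d_v + (1-\alpha)\, m_v\bigr),
\]
where $m_v$ is the average degree of the neighbours of $v$. We may also use the row-sum bound on a suitably scaled quotient matrix.

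The main obstacle is the sign analysis of that quadratic factor uniformly over $\alpha$ and $s$. This is where the threshold $f(\alpha)$ should come from. For $\alpha\le 2/3$, the adjacency-type terms dominate, and the critical cases are $s=2k+1$ and $s$ near $(n+2k-1)/2$; convexity of $\varphi_s$ in $s$ reduces the whole range to these two endpoints, and the endpoint checks need $n\ge 4k+7$. For $\alpha>2/3$ the diagonal term $\alpha s$ becomes significant, since vertices of $S$ have degree $n-1$. The largest-$s$ endpoint then requires
\[
\alpha(n-1)+(1-\alpha)(\text{something}) < n-2,
\]
which rearranges to $n\ge \frac{k+2}{1-\alpha}+3$. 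Finally, the equality case is forced: $s=2k$ and no edges missing, so $G=G_{2k}$.
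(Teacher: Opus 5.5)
\textbf{Verdict: genuine gap.} The paper does not prove this statement; it is quoted from \cite{MR4822576}. The nearest argument in the paper is \cref{lem:q1q2}, which is exactly the case $\alpha=\tfrac12$ (since $q=2\rho_{1/2}$). Your outline follows that template: the Ma--Liu criterion, $G\subseteq G_s$, the $3\times 3$ equitable quotient, $\varphi_s-\varphi_{2k}=(s-2k)\,g_s(x)$ with $g_s$ quadratic in $x$ (leading coefficient $1-\alpha$), and $\rho_\alpha(G_{2k})>\rho_\alpha(K_{n-1})=n-2$. That framework is sound. The gap is the one step with real content: showing $g_s(x)>0$ (or $\varphi_s(x)>0$) on $[n-2,\infty)$ for every $2k+1\le s\le\frac{n+2k-1}{2}$, and extracting the thresholds $4k+7$ and $\frac{k+2}{1-\alpha}+3$. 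You assert this rather than carry it out, and the mechanism you describe for it does not work as stated.

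Concretely, the quotient has diagonal entries $a_{11}=\alpha(n-1)+(1-\alpha)(s-1)$, $a_{22}=n+2k-2-(2-\alpha)s$, $a_{33}=\alpha s$, and off-diagonal products $(1-\alpha)^2 s n_1$ and $(1-\alpha)^2 s(s-2k+1)$. Expanding $\det(xI-M_s)$, the $s^3$-coefficient of $\varphi_s(x)$, and hence the $s^2$-coefficient of $g_s(x)$, is the constant $(1-\alpha)(3\alpha-2)$. At $\alpha=\tfrac12$, after scaling by $8$, this is the $-2s^2$ in the paper's $g(x)$. So $\varphi_s$ itself is cubic in $s$ and is not the relevant object.

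What reduces the range of $s$ to the two endpoints is \emph{concavity} of $g_s$ in $s$, which holds only for $\alpha\le\tfrac23$. Even there, both endpoint verifications on $[n-2,\infty)$ still have to be done. For $\alpha>\tfrac23$, $g_s$ is convex in $s$, so its minimum over the range can sit at an interior $s$. Checking the largest-$s$ endpoint then says nothing about the other values; you must locate or bound the vertex of that parabola, or argue differently.

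Nor does ``$\alpha(n-1)+(1-\alpha)(\text{something})<n-2$'' rearrange to $n\ge\frac{k+2}{1-\alpha}+3$. Using the $S$-diagonal entry at $s=\frac{n+2k-1}{2}$ gives $(1-\alpha)(n-2k+1)>2$, which is a different condition. Near the threshold, the coupling to the $s-2k+1$ isolated vertices is not negligible either. For example, at $\alpha=\tfrac23$, $n=3k+9$, $s=\frac{n+2k-1}{2}$ (the limit of the case $\alpha\downarrow\tfrac23$, $n\approx 3k+9$), it raises the eigenvalue by roughly $0.11k$, against a margin of roughly $0.17k$ left by the diagonal entry.

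A smaller point concerns the no-$k$-matching case, if you keep it. An edge count alone cannot bound $\rho_\alpha$ for $\alpha$ near $1$, since already $\rho_\alpha(K_{1,n-1})\to n-1$. You need a degree term, e.g.\ $\rho_\alpha(G)\le\alpha\Delta+(1-\alpha)\rho(G)$.
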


\begin{theorem}[\cite{MR4600143}]
Let $k$ be a positive integer and $n \geq 2k + 2$ be an even integer. If $G$ is a connected graph of order $n$, and
\[
\mu(G) \leq \mu(K_{2k} \vee (K_{n - 2k - 1} \cup K_{1})),
\]
then $G$ is $k$-extendable, unless $G \cong K_{2k} \vee (K_{n - 2k - 1} \cup K_{1})$.
\end{theorem}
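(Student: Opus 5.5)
The plan is to argue by contradiction using a Tutte-type characterization of $k$-extendability, and then to reduce to an extremal join graph whose distance spectral radius can be compared directly with that of $K_{2k} \vee (K_{n-2k-1} \cup K_1)$.

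\textbf{Structural step.} Suppose $G$ is connected of even order $n \ge 2k+2$, satisfies the distance condition, and is not $k$-extendable. A standard consequence of Tutte's theorem applies in this situation. Either $G$ has no $k$-matching, or there is a $k$-matching $M$ with $G - V(M)$ having no perfect matching. In the second case there is a set $S' \subseteq V(G) \setminus V(M)$ such that $o(G - V(M) - S') > |S'|$. Put $S = V(M) \cup S'$ and $s = |S'|$. Then $|S| = 2k + s$ and $o(G - S) \ge s + 2$ by parity, since $n$ is even. The case with no $k$-matching is handled similarly through the Tutte--Berge formula, and it yields a deficiency set of the same shape.

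\textbf{Reduction to a join graph.} Adding edges strictly decreases the distance spectral radius, because $\mathcal{D}$ is entrywise monotone and irreducible, and the Perron vector is positive. So we may assume $G$ is a spanning supergraph of the form
\[
K_{2k+s} \vee (K_{n_1} \cup \cdots \cup K_{n_{s+2}}),
\]
with odd $n_i \ge 1$ and $\sum n_i = n - 2k - s$. It then suffices to show
\[
\mu\bigl(K_{2k+s} \vee (K_{n_1} \cup \cdots \cup K_{n_{s+2}})\bigr) \ge \mu\bigl(K_{2k} \vee (K_{n-2k-1} \cup K_1)\bigr),
\]
with equality only when $s = 0$, $n_1 = n - 2k - 1$ and $n_2 = 1$. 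The case $s = 0$ is settled by a balancing lemma: moving vertices from a smaller odd clique into the largest one, two at a time, weakly decreases $\mu$. I would prove this by comparing quadratic forms with the Perron vector, using the fact that vertices in the same clique receive equal Perron entries. The result is that the extremal configuration is $K_{2k+s} \vee (K_{n-2k-2s-1} \cup (s+1)K_1)$.

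\textbf{Main obstacle.} The hard step is comparing $G_s = K_{2k+s} \vee (K_{n-2k-2s-1} \cup (s+1)K_1)$ with $G_0$ for $s \ge 1$. For this I would use an equitable quotient matrix. The three parts have sizes $2k+s$, $n-2k-2s-1$ and $s+1$, with distances $1$ within $S$, $1$ within the big clique, $2$ between the singletons and the big clique, and $2$ among the singletons. This gives a $3 \times 3$ matrix whose largest root equals $\mu(G_s)$. To show $\mu(G_s) > \mu(G_0)$, I would take the Perron vector $x$ of $G_0$ and use the Rayleigh quotient $x^T \mathcal{D}(G_s) x \ge \dots$. Alternatively, I would evaluate the characteristic polynomial $f_s$ of the quotient at $\mu(G_0)$, aiming to show $f_s(\mu(G_0)) < 0$. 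Since $\mu(G_s) \ge n - 1 + $ (extra distance contributions), a crude bound may suffice. One possibility is to show $\mu(G_s) \ge \frac{1}{n}\mathbf{1}^T \mathcal{D}(G_s) \mathbf{1}$, which already exceeds $\mu(G_0)$ for $s \ge 1$, and then use an upper estimate for $\mu(G_0)$ from its own quotient cubic. I expect the polynomial sign analysis to be the delicate part, possibly requiring a split between small $s$ and large $s$.
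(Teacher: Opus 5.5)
The paper does not prove this statement; it only quotes it from \cite{MR4600143}. The closest argument in the paper is the proof of \cref{thm:mu-ext}, and your outline follows the same skeleton. Your reduction is sound:
\begin{enumerate}[label=(\roman*)]
\item Tutte's theorem plus parity gives $S$ with $|S|=2k+s$ and $o(G-S)\ge s+2$.
\item Adding edges strictly lowers $\mu$.
\item Your balancing lemma is true. With the Perron vector of the more unbalanced graph, the eigen-equations show the entry on a clique of size $n_l$ is proportional to $1/(\mu+1+n_l)$, and this makes the quadratic-form comparison go through.
\end{enumerate}
You do need this balancing step: Tutte gives odd components, whereas in the paper's fractional setting \cref{lem:k-ext-condition} yields isolated vertices directly. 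The no-$k$-matching case is best handled by adding edges until the matching number is exactly $k$. Since $n/2\ge k+1$, the resulting graph has no perfect matching, so it falls into your main case.

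The genuine gap is the step you flag: $\mu(G_s)>\mu(G_0)$ for every $s\ge1$ and every even $n\ge 2k+2s+2$ is never proved, and the one concrete shortcut you offer is false. Take $s=1$ and $n=2k+4$. The nontrivial parts of the complements are $K_3$ for $G_1=K_{2k+1}\vee 3K_1$ and $K_{1,3}$ for $G_0$. Both have three edges, and $\mathcal{D}=J-I+A(\overline{G})$ here, so $2W(G_1)/n=2W(G_0)/n<\mu(G_0)$. For $k=1$: $2W(K_3\vee 3K_1)/6=6$, while $\mu(G_0)$ is the largest root of $x^3-3x^2-18x-8$, which is $-8$ at $x=6$, so $\mu(G_0)>6$. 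The true gap is tiny: for $k=3$, $\mu(G_1)=5+\sqrt{22}\approx 9.690$ against $\mu(G_0)=(7+\sqrt{153})/2\approx 9.685$. So only an exact comparison can work.

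Running the quotient computation of the proof of \cref{thm:mu-ext} with $\delta=2k$ and $|S|=2k+s$ gives $\varphi_{G_0}(x)-\varphi_{G_s}(x)=s\,h_s(x)$, where
\[
h_s(x)=x^2+(2n-6k-5s-6)x+4k^2+6ks+2s^2-2kn-ns+2k-2s+n-6 .
\]
It therefore suffices to show $h_s(\mu(G_0))>0$.
\begin{itemize}
\item When $n\ge 4k+2$, the Wiener bound (strict, since $G_0$ is not transmission-regular) gives $\mu(G_0)>n-1+\frac{2(n-2k-1)}{n}\ge n$. Also $h_s$ is increasing on $[n,\infty)$ with $h_s(n)\ge 2ks+2s^2-4\ge 0$, so this range closes.
\item For $2k+2s+2\le n\le 4k$ this fails. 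With $k=3$, $s=1$, $n=10$ one has $h_1(x)=x^2-9x-6$, whose larger root ($\approx 9.623$) lies above the Wiener bound $9.6$ for $\mu(G_0)$.
\end{itemize}
A sharper lower bound for $\mu(G_0)$ in this range, or a separate treatment of it, is exactly the missing ingredient, and your proposal does not supply it. The paper's own distance argument gets by with the Wiener bound only because \cref{thm:mu-ext} assumes $n\ge 12\delta-2k+1$.
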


\begin{theorem}[\cite{MR4899831}]
    Let $k$ be a positive integer and $n \geq 2k+3$ be an odd integer, and let $G$ be a connected graph of order $n$.

    \begin{enumerate}[label = (\roman*)]
        \item For $n \geq 2k+9$, if 
        \[
        \mu(G) \leq \mu\left(K_{2k} \vee (K_{n-2k-1} \cup K_{1})\right),
        \]
        then $G$ is a fractional $k$-extendable graph unless $G = K_{2k} \vee (K_{n-2k-1} \cup K_{1})$.

        \item For $n = 2k+3$, if 
        \[
        \mu(G) \leq \mu(K_{2k+1} \vee 2K_{1}),
        \]
        then $G$ is a fractional $k$-extendable graph unless $G = K_{2k+1} \vee 2K_{1}$.

        \item For $n = 2k+5$, if 
        \[
        \mu(G) \leq \mu(K_{2k+2} \vee 3K_{1}),
        \]
        then $G$ is a fractional $k$-extendable graph unless $G = K_{2k+2} \vee 3K_{1}$.

        \item For $n = 2k+7$, if 
        \[
        \mu(G) \leq \mu(K_{2k+3} \vee 4K_{1}),
        \]
        then $G$ is a fractional $k$-extendable graph unless $G = K_{2k+3} \vee 4K_{1}$.
    \end{enumerate}
\end{theorem}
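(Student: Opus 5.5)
We prove the statement (the cited result for odd $n$) by contradiction. Suppose $G$ is connected of odd order $n$ with $\mu(G)$ at most the stated bound, and $G$ is not fractional $k$-extendable. We use the Ma--Liu characterization \cite{MR2102027}: $G$ is fractional $k$-extendable if and only if
\[
i(G-S)\le |S|-2k
\]
for every $S\subseteq V(G)$ that contains a $k$-matching, where $i(\cdot)$ counts isolated vertices. Failure therefore gives a set $S$ with $|S|=s\ge 2k$, containing a $k$-matching, such that $i(G-S)\ge s-2k+1$. Adding edges strictly decreases the distance spectral radius and does not destroy the violating set. So we may assume $G$ is edge-maximal, namely
\[
G=K_s\vee\bigl(K_{n-2s+2k-1}\cup (s-2k+1)K_1\bigr)
\]
when $n-2s+2k-1\ge 1$. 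In the remaining case the non-isolated part is empty, $n=2s-2k+1$, and $G=K_s\vee (s-2k+1)K_1$.

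Next we compare $\mu$ across this family, parametrized by $s$ with $2k\le s\le (n+2k-1)/2$. Let $x$ be the Perron vector. Constant entries on the three vertex classes give a $3\times 3$ quotient matrix, and $\mu$ is the largest root of its cubic characteristic polynomial. For $s=2k$ the graph is $K_{2k}\vee(K_{n-2k-1}\cup K_1)$. For the largest $s$, namely $s=(n+2k-1)/2$, the graph is $K_{s}\vee (s-2k+1)K_1$. With $n=2k+3,2k+5,2k+7$ this largest $s$ is $2k+1,2k+2,2k+3$, giving exactly $K_{2k+1}\vee 2K_1$, $K_{2k+2}\vee 3K_1$ and $K_{2k+3}\vee 4K_1$. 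These are the extremal graphs in cases (ii)--(iv).

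The core step is a direct lower bound. Take $x$ to be the unit Perron vector of the extremal graph $H^*$, or use the Rayleigh quotient with the vector constant on each class. We then show $\mu(G)>\mu(H^*)$ for every other $s$. In practice we compute the difference of the characteristic polynomials $f_s(t)-f_{s^*}(t)$ and evaluate its sign at $t=\mu(H^*)$. Alternatively we bound $\mu(G)\ge \mathbf 1^{T}\mathcal D\mathbf 1/n=2W(G)/n$. Here $W(G)=\binom n2+\binom{s-2k+1}{2}+(s-2k+1)(n-2s+2k-1)$, a quadratic in $s$ that is convex. Hence its maximum over the interval occurs at an endpoint, and comparing the two endpoints shows which one is extremal.

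For $n\ge 2k+9$ the left endpoint $s=2k$ should win. For small $n$ the right endpoint wins, which is why (ii)--(iv) have different extremal graphs. The main obstacle is exactly this endpoint comparison and the resulting threshold. The Wiener index heuristic alone is not sharp, because $\mu$ is not a monotone function of $W$. So we must compare the largest roots of the two cubics at the endpoints.

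Our plan is to evaluate the cubic of one endpoint graph at the spectral radius of the other. We bound that spectral radius using $n-1+\text{(correction)}$ estimates, and show the sign is correct when $n\ge 2k+9$. We verify $n=2k+3,2k+5,2k+7$ separately, where the interval for $s$ has only $2$, $3$ or $4$ values, so the remaining graphs can be compared case by case.

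Equality forces $G$ to equal the edge-maximal extremal graph, since any proper spanning subgraph has strictly larger $\mu$. This yields the exceptional graphs in (i)--(iv).
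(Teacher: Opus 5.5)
\noindent There is a genuine gap. Your reduction is sound, and it is the one the paper itself uses for the distance spectral radius; the paper only cites this theorem, and its closest in-house argument is the proof of \cref{thm:mu-ext}. Ma--Liu gives $S$ with $s\ge 2k$ and $i(G-S)\ge s-2k+1$. \cref{lem:mu-monotonicity} lets you replace $G$ by $G_1(s)=K_s\vee(K_{n-2s+2k-1}\cup(s-2k+1)K_1)$. You also identify the endpoint graphs correctly.

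The problem is the step that carries all the content: $\mu(G_1(s))>\mu(H^*)$ for \emph{every} admissible $s\ne s^*$ and every $k$. You announce it but never carry it out, and you then recast it as a comparison of the two endpoints, which nothing in the proposal justifies. For (i) the range $2k\le s\le (n+2k-1)/2$ grows with $n$. You give no reason why $\mu(G_1(s))$ cannot fall below $\mu(G_1(2k))$ at an interior $s$.

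The only support offered is the Wiener-index quadratic, and it is wrong as stated. With $m=s-2k+1$,
\[
W(G_1(s))=\binom n2-\tfrac32\,m^2+\left(n-2k+\tfrac12\right)m ,
\]
which is concave in $s$, not convex. A maximum at an endpoint would also be the wrong extremum, since $H^*$ has to \emph{minimize} $\mu$. Concavity is what makes the endpoint picture plausible: the minimum of $W$ sits at an endpoint, and the endpoint values of $W-\binom n2$, namely $n-2k-1$ and $(n-2k+1)(n-2k-1)/8$, reproduce the threshold $n-2k\gtrless 7$. But, as you concede, $2W/n$ is only a lower bound on $\mu$ and proves nothing about the ordering. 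Moreover, at $n=2k+7$ the two endpoint values coincide, so the heuristic is silent exactly where case (iv) lives.

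What is missing is an estimate uniform in $s$, as in the paper's proof of \cref{thm:mu-ext}:
\begin{enumerate}[label=(\alph*)]
\item Choose an explicit lower bound $L\le\mu(H^*)$. For (i) one candidate is $2W(H^*)/n=n-1+2(n-2k-1)/n$, sharpened if needed.
\item Write $\varphi_{H^*}(x)-\varphi_{G_1(s)}(x)=(s-2k)\,h_s(x)$ with $h_s$ quadratic in $x$.
\item Prove $h_s(x)>0$ for all $x\ge L$ and all $2k<s\le (n+2k-1)/2$. Handle the boundary value $s=(n+2k-1)/2$ separately, since there the middle class is empty and the quotient has two classes.
\item Conclude: $\varphi_{G_1(s)}(\mu(H^*))<0$ forces $\mu(G_1(s))>\mu(H^*)$.
\end{enumerate}
The same explicit root comparisons, valid for every $k\ge 1$, are needed for the finitely many $s$ in (iii) and (iv). You assert these can be done ``case by case'' but do none of them.

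Only (ii) is free. $K_{2k}\vee(K_2\cup K_1)$ is $K_{2k+3}$ minus two adjacent edges. It is therefore a proper connected spanning subgraph of $K_{2k+1}\vee 2K_1\cong K_{2k+3}-e$, and \cref{lem:mu-monotonicity} finishes that case. As written, the proposal is a correct outline of the strategy, not a proof.
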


Specifically, Zhou \cite{MR5004976} proved the following result.

\begin{theorem}[\cite{MR5004976}]
Let $k$ be a positive integer, and let $G$ be a connected graph of order $n$ with minimum degree $\delta$. 
\begin{enumerate}
    \item If $n \geq 2k + 9$ and
\[
\rho(G) \geq \rho(K_{2k} \vee (K_{n - 2k - 1} \cup K_1)),
\]
then $G$ is fractional $k$-extendable, unless $G \cong K_{2k} \vee (K_{n - 2k - 1} \cup K_1)$.
    \item If $n \geq 5\delta + 1$, $\delta \geq 2k + 1$ and
\[
\rho(G) \geq \rho(K_{\delta} \vee (K_{n - 2\delta + 2k - 1} \cup (\delta - 2k + 1)K_1)),
\]
then $G$ is fractional $k$-extendable, unless $G \cong K_{\delta} \vee (K_{n - 2\delta + 2k - 1} \cup (\delta - 2k + 1)K_1)$.
\end{enumerate}
\end{theorem}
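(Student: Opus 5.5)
The statement is Zhou's theorem (a cited result), and I prove it along the standard route. The starting point is the Ma--Liu characterization \cite{MR2102027}. A graph $G$ with at least $2k+2$ vertices is fractional $k$-extendable if and only if, for every $S\subseteq V(G)$ such that $G[S]$ contains a $k$-matching,
\[
i(G-S)\le |S|-2k,
\]
where $i(\cdot)$ counts isolated vertices.

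So suppose $G$ is not fractional $k$-extendable. Then there is a set $S$ with $|S|=s\ge 2k$, $G[S]\supseteq kK_2$, and $i(G-S)\ge s-2k+1$. Adding edges never destroys the hypothesis, and by Perron--Frobenius it strictly increases $\rho$. We therefore saturate: make $S$ a clique, make all non-isolated vertices of $G-S$ into one clique, and join them completely to $S$. The result is a spanning supergraph
\[
G\subseteq H_s:=K_s\vee\bigl(K_{n-2s+2k-1}\cup (s-2k+1)K_1\bigr),
\]
where we take $i(G-S)=s-2k+1$ exactly; moving an extra isolated vertex into the clique part only adds edges. Hence $\rho(G)\le\rho(H_s)$, with equality only if $G\cong H_s$.

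For part (1), the task is to show that $\rho(H_s)\le\rho(H_{2k})$ for every admissible $s$, with equality only when $s=2k$. Here $H_{2k}=K_{2k}\vee(K_{n-2k-1}\cup K_1)$. I would use the equitable quotient matrix of $H_s$ for the partition $S$, clique part, isolated part. Its largest root is $\rho(H_s)$, the largest root of a cubic $f_s(x)$.

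At the extreme $s=(n+2k-1)/2$ the clique part is empty. There a direct bound works: $\rho(H_s)<s+\sqrt{s(s-2k+1)}$, which is below $n-2$ once $n\ge 2k+9$. Meanwhile $\rho(H_{2k})>n-2$, since $H_{2k}$ contains $K_{n-1}$.

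For intermediate $s$, take $x=\rho(H_{2k})\ge n-2$ and show $f_s(x)-f_{2k}(x)>0$, so $f_s$ has no root at or beyond $x$. This difference is a polynomial in $x,s,n,k$. I expect it to factor as $(s-2k)$ times a quantity that is positive for $x\ge n-2$ and $n\ge 2k+9$. Verifying this positivity is the main obstacle: it is a careful but elementary inequality, and the lower bound $n\ge 2k+9$ is exactly what makes it work at the small end.

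For part (2), the minimum degree adds a constraint. An isolated vertex of $G-S$ has all its neighbours in $S$, so $s\ge\delta$ whenever $i(G-S)\ge1$, which is the case here. Thus $s$ ranges over $[\delta,(n+2k-1)/2]$, and the candidate extremal graph is $H_\delta$. In the saturation step each isolated vertex is joined to all of $S$; that only adds edges and keeps the minimum degree at least $\delta$.

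The comparison is the same as in part (1) but run from $s=\delta$: show $f_s(\rho(H_\delta))>0$ for $\delta<s\le(n+2k-1)/2$. We have $\rho(H_\delta)>n-\delta+2k-2$, because $H_\delta$ contains $K_{n-\delta+2k-1}$. For $s$ in the large range, $\rho(H_s)\le s+\sqrt{s(s-2k+1)}$ plus a lower-order term, and $n\ge5\delta+1$ keeps this below $\rho(H_\delta)$. For the remaining $s$, the polynomial difference argument applies, again expected to factor with a factor $(s-\delta)$. Here $n\ge5\delta+1$ and $\delta\ge 2k+1$ are what guarantee the sign, and checking them is again the delicate step.

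Equality throughout forces $s=2k$ (respectively $s=\delta$) and $G=H_s$, which gives the stated exceptional graphs.
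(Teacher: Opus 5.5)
The paper gives no proof of this statement; it quotes it from Zhou. Your skeleton is the one the paper uses for Theorems~\ref{thm:q-ext} and~\ref{thm:mu-ext}: the Ma--Liu criterion, saturation to $H_s=K_s\vee(K_{n-2s+2k-1}\cup(s-2k+1)K_1)$, the equitable quotient, and comparison of characteristic polynomials above a clique lower bound, with the difference divisible by $s-2k$ or $s-\delta$.

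\emph{The gap.} It is your separate treatment of the extreme range $n=2s-2k+1$. There $s(s-2k+1)>(s-2k)^2$, so $s+\sqrt{s(s-2k+1)}>2s-2k=n-1$. This exceeds the spectral radius of every non-complete graph of order $n$. So your bound is never below $n-2$, nor below $\rho(H_{2k})$ or $\rho(H_\delta)$, and adding a lower-order term cannot help.

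The exact value $\rho(K_s\vee(s-2k+1)K_1)=\tfrac12\bigl(s-1+\sqrt{(s-1)^2+4s(s-2k+1)}\bigr)$ does not save the strategy either:
\begin{enumerate}
\item For $k=10$, $s=24$, $n=29$ it is $\tfrac12(23+\sqrt{1009})\approx 27.38>27=n-2$, while $\rho(H_{20})\approx 27.52$.
\item In part (2), for $k=1$, $\delta=14$, $n=71$, $s=36$ it is $\tfrac12(35+\sqrt{6265})\approx 57.08>57=n-\delta+2k-2$, while $\rho(H_{14})\approx 57.79$.
\end{enumerate}
So in this range the clique bounds $n-2$ and $n-\delta+2k-2$ are too weak to separate $\rho(H_s)$ from the extremal value.

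\emph{The repair.} Drop the split and run your polynomial comparison for every $s$, including $n=2s-2k+1$. In that boundary case the formal three-part polynomial is $(x+1)$ times the two-part one, so its largest root is still $\rho(H_s)$. Explicitly,
$f_s(x)=x^3-(n-s+2k-3)x^2-\bigl(n-s+2k-2+s(s-2k+1)\bigr)x+s(s-2k+1)(n-2s+2k-2)$.

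For part (1), $(f_s-f_{2k})/(s-2k)=x^2-sx+(s+1)(n-2k-2)-2s(s-2k+1)$.
\begin{enumerate}
\item This is increasing for $x\ge n-2$.
\item Its value at $x=n-2$ is concave in $s$.
\item For $n\ge 2k+9$ that value is at least $18k+52$ at $s=2k+1$.
\item At $s=(n+2k-1)/2\ge 2k+4$ it equals $2\bigl(s^2-(3k+2)s+2k^2\bigr)\ge 16$.
\end{enumerate}

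For part (2), $(f_s-f_\delta)/(s-\delta)=x^2-(s+\delta-2k)x-2(s^2+s\delta+\delta^2)+(n+6k-4)(s+\delta)-(2k-1)(n+2k-2)$.
\begin{enumerate}
\item This is increasing for $x\ge n-\delta+2k-2$.
\item Its value there is concave in $s$ and positive at $s=\delta+1$.
\item At $s=(n+2k-1)/2$ it equals $s(2s-5\delta+4k-4)+2k\delta>0$; this is exactly where $n\ge 5\delta+1$ is used.
\end{enumerate}

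The paper handles the boundary case $n=2s-2k+1$ of its $q$- and $\mu$-versions the same way, by direct polynomial comparison rather than by an upper bound on the spectral radius of $H_s$.
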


Inspired by this work, we establish analogous sufficient conditions for fractional $k$-extendability based on the number of edges, the signless Laplacian spectral radius, and the distance spectral radius. Our main results are the following three theorems.

\begin{theorem}\label{thm:e-ext}
Let $k$ be a positive integer, and let $G$ be a connected graph of order $n$ with minimum degree $\delta$. 
\begin{enumerate}
    \item If $n \geq 2k + 9$ and
\[
e(G) \geq e(K_{2k} \vee (K_{n - 2k - 1} \cup K_1)),
\]
then $G$ is fractional $k$-extendable, unless $G \cong K_{2k} \vee (K_{n - 2k - 1} \cup K_1)$.
    \item If $n \geq 6\delta$, $\delta \geq 2k + 1$ and
\[
e(G) \geq e(K_{\delta} \vee (K_{n - 2\delta + 2k - 1} \cup (\delta - 2k + 1)K_1)),
\]
then $G$ is fractional $k$-extendable, unless $G \cong K_{\delta} \vee (K_{n - 2\delta + 2k - 1} \cup (\delta - 2k + 1)K_1)$.
\end{enumerate}
\end{theorem}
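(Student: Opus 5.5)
The plan is to reduce the statement to an edge-counting extremal problem using the Ma--Liu characterization of fractional $k$-extendable graphs \cite{MR2102027}. In the form I would use, it says that $G$ is fractional $k$-extendable if and only if $i(G-S)\le |S|-2k$ for every $S\subseteq V(G)$ such that $G[S]$ contains a $k$-matching. Here $i(\cdot)$ denotes the number of isolated vertices. I am quoting this form from memory and would check it against the paper's preliminaries.

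Suppose $G$ satisfies the edge condition but is not fractional $k$-extendable. Then there is a set $S$ with $|S|=s$ such that $G[S]$ has a $k$-matching (so $s\ge 2k$) and $i(G-S)\ge s-2k+1$. Choose exactly $s-2k+1$ isolated vertices of $G-S$. Adding all missing edges elsewhere shows that $G$ is a spanning subgraph of
\[
H_s:=K_s\vee\bigl(K_{n-2s+2k-1}\cup (s-2k+1)K_1\bigr),
\]
which requires $n\ge 2s-2k+1$, i.e.\ $s\le (n+2k-1)/2$. Counting edges, all vertices except the $s-2k+1$ singletons form a clique, and each singleton is joined to $S$. Hence
\[
e(G)\le e(H_s)=f(s):=\binom{n-s+2k-1}{2}+s(s-2k+1).
\]
This $f$ is a quadratic in $s$ with positive leading coefficient $3/2$, so it is convex. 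Therefore its maximum over any interval of admissible $s$ is attained at an endpoint.

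For part 1, the interval is $2k\le s\le (n+2k-1)/2$, and $f(2k)=e(K_{2k}\vee(K_{n-2k-1}\cup K_1))$. I would compute $f(2k)-f\bigl(\lfloor (n+2k-1)/2\rfloor\bigr)$ explicitly. The leading term is of order $n^2/8$, so this difference is positive once $n\ge 2k+9$; this is a routine but fiddly check with parity cases for $n+2k-1$. Convexity then gives $f(s)<f(2k)$ for every $s\neq 2k$ in the range. Combined with $e(G)\ge f(2k)$, this forces $s=2k$ and $e(G)=e(H_{2k})$, so $G\cong H_{2k}$.

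For part 2 there is an extra lower bound on $s$. Each of the $s-2k+1\ge1$ isolated vertices of $G-S$ has all its neighbours in $S$, so $\delta\le s$. The interval becomes $\delta\le s\le (n+2k-1)/2$, and $f(\delta)=e(K_{\delta}\vee(K_{n-2\delta+2k-1}\cup(\delta-2k+1)K_1))$. The key step, and the main obstacle, is showing $f(\delta)>f(s_{\max})$ for $s_{\max}=\lfloor (n+2k-1)/2\rfloor$ using $n\ge 6\delta$ and $\delta\ge 2k+1$. I would write $f(\delta)-f(s_{\max})$ as $(s_{\max}-\delta)\cdot g(n,\delta,k)$, where $g$ is linear, and then show $g>0$ under the hypotheses. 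I expect the coefficient of $n$ in $g$ to be about $1/4$ against a $\delta$-term of size roughly $3\delta/2$, which is where $n\ge 6\delta$ comes in. If the factorization is messy, I would instead bound $f(s_{\max})$ crudely from above and $f(\delta)$ from below. As before, equality forces $s=\delta$ and $G\cong H_\delta$.

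Finally, I would verify that the extremal graphs really are exceptions. Taking $S$ to be the $K_{2k}$, respectively the $K_\delta$, which contains a $k$-matching, gives $i(G-S)=|S|-2k+1$. So these graphs are not fractional $k$-extendable, and the exclusions in the statement are necessary.
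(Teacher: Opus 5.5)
Your proposal is correct and follows the paper's proof. Both use the Ma--Liu criterion to make $G$ a spanning subgraph of $K_s\vee(K_{n-2s+2k-1}\cup(s-2k+1)K_1)$ and then compare edge counts; in part~2 the difference $f(\delta)-f(s)=\tfrac12(s-\delta)(2n+8k-3\delta-3s-5)$ is exactly what the paper bounds using $n\ge 6\delta$ and $n\ge 2s-2k+1$. The only difference is bookkeeping in part~1: you use convexity in $s$ to reduce to the endpoint $s_{\max}$, which gives $t(t-3)/2$ or $t(t-1)/2$ with $t=s_{\max}-2k\ge 4$ depending on parity, while the paper fixes $s$ and uses monotonicity in $n$ with the split $n=2s-2k+1$ versus $n\ge 2s-2k+2$; these are the same two quantities.
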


\begin{theorem}\label{thm:q-ext}
Let $k$ be a positive integer and let $G$ be a connected graph of order $n$ with minimum degree $\delta$. 
\begin{enumerate}
    \item If $n \geq 2k + 6$ and 
\[
q(G) \geq q(K_{2k} \vee (K_{n - 2k - 1} \cup K_1)),
\]
then $G$ is fractional $k$-extendable, unless $G \cong K_{2k} \vee (K_{n - 2k - 1} \cup K_1)$.
    \item If $n \geq 6.5\delta$, $\delta \geq 2k + 1$ and
\[
q(G) \geq q(K_{\delta} \vee (K_{n - 2\delta + 2k - 1} \cup (\delta - 2k + 1)K_1)),
\]
then $G$ is fractional $k$-extendable, unless $G \cong K_{\delta} \vee (K_{n - 2\delta + 2k - 1} \cup (\delta - 2k + 1)K_1)$.
\end{enumerate}
\end{theorem}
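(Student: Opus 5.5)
We prove both parts by contraposition. The tool is the structural characterization of fractional $k$-extendability due to Ma and Liu \cite{MR2102027}: if $G$ is connected and not fractional $k$-extendable, then there is a set $S\subseteq V(G)$ with $i(G-S)\ge |S|-2k+1$, where $i(\cdot)$ counts isolated vertices. Put $s=|S|$ and $i=i(G-S)$.

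\textbf{Reduction to a join.} Adding edges cannot destroy the obstruction, so it preserves the failure of the property. Moreover $q$ strictly increases when edges are added to a connected graph. We therefore make $S$ a clique, join $S$ completely to $V(G)\setminus S$, and turn the non-isolated part of $G-S$ into a single clique. This shows that $G$ is a spanning subgraph of
\[
H_s=K_s\vee\bigl(K_{n-2s+2k-1}\cup (s-2k+1)K_1\bigr).
\]
Here we may take $i=s-2k+1$ exactly, by absorbing surplus isolated vertices into the clique; this only adds edges. Since $n\ge 2k+2$ and $s\ge 2k$, we get $q(G)\le q(H_s)$, with equality only if $G=H_s$. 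It remains to maximize $q(H_s)$ over the admissible range of $s$.

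\textbf{Part 1.} Here $s$ ranges over $2k\le s\le (n+2k-1)/2$. The endpoint $s=2k$ gives exactly the extremal graph $K_{2k}\vee(K_{n-2k-1}\cup K_1)$, so we must show $q(H_s)<q(H_{2k})$ for every $s\ge 2k+1$.

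\textbf{Part 2.} Every isolated vertex of $G-S$ has degree at most $s$, so $s\ge\delta$. Thus $s$ ranges over $\delta\le s\le (n+2k-1)/2$, and the claimed extremal graph is $H_\delta$.

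\textbf{Comparing $q(H_s)$ across $s$.} We use the equitable quotient matrix of $H_s$ with respect to the partition $S$, the clique part (of size $n_1=n-2s+2k-1$), and the $t=s-2k+1$ isolated vertices:
\[
B_s=\begin{pmatrix} n+s-2 & n_1 & t\\ s & 2(n_1+s-1)-s\ \ (\text{i.e. } 2n_1+s-2) & 0\\ s & 0 & s\end{pmatrix}.
\]
Then $q(H_s)$ is the largest root of the cubic $\varphi_s(x)=\det(xI-B_s)$.

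\textbf{Upper bound for large $s$.} At the upper end of the range ($s$ near $n/2$) we use a cruder bound. By Das's inequality $q\le \max_{uv\in E}(d(u)+d(v))$, or by $q\le 2\Delta$ combined with a count of edges, we obtain $q(H_s)\le n+s-2+\text{small}$, which is still less than $q(H_{s_0})$. Here $s_0$ denotes $2k$ in Part 1 and $\delta$ in Part 2. Also, $q(H_{s_0})$ exceeds $2(n_1+s_0-1)=2(n-s_0+2k-2)$, because $H_{s_0}$ contains the clique $K_{n-s_0+2k-1}$, and $q(K_m)=2(m-1)$.

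\textbf{Middle range.} For the remaining values of $s$ we compare directly. We evaluate $\varphi_s$ at $x_0=q(H_{s_0})$, or at a convenient lower bound such as $2(n-s_0+2k-2)$, and show that $\varphi_s(x)>0$ for all $x\ge x_0$. That gives $q(H_s)<x_0$. Concretely, we study $\varphi_s(x_0)-\varphi_{s_0}(x_0)$ as a function of $s$. It is a polynomial in $s$, which vanishes at $s=s_0$ and is convex or of controlled sign on the interval, so it suffices to check its sign at the two endpoints. We also need $\varphi_s'(x)>0$ beyond $x_0$, which follows since $x_0$ exceeds the other two eigenvalues of $B_s$; those are at most about $n+s$ by interlacing arguments.

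\textbf{Main obstacle.} The crucial step is the endpoint comparison at $s$ close to $(n+2k-1)/2$. That is exactly where the hypotheses $n\ge 2k+6$ and $n\ge 6.5\delta$ are needed. In Part 2 the cubic depends on both $\delta$ and $k$, so the sign verification requires careful bookkeeping. First I would try bounding $q(H_s)\le 2\Delta$ together with $q(H_s)\le \max_v(d(v)+m(v))$, where $m(v)$ is the average degree of the neighbours of $v$. I would fall back on the explicit cubic only for the intermediate values of $s$. Equality tracking then gives $G\cong H_{s_0}$.
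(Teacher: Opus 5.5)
Your reduction to $H_s=K_s\vee\bigl(K_{n-2s+2k-1}\cup(s-2k+1)K_1\bigr)$, including $s\ge\delta$ in Part~2, is exactly the paper's. You also correctly locate the difficulty at the top of the range of $s$. \textbf{The gap is your ``upper bound for large $s$'' step: no degree-based bound on $q(H_s)$, compared with the clique bound on $q(H_{s_0})$, can work there.} Das's bound and $2\Delta$ both give only $2n-2$ for $H_s$, since an edge inside $S$ joins two vertices of degree $n-1$. The estimate $q(H_s)\le n+s-2+\text{small}$ is false: for $n=2s-2k+1$ we have $H_s=K_s\vee tK_1$ and $q(H_s)=\tfrac12\bigl(n+2s-2+\sqrt{(n-2)^2+4st}\bigr)$, which exceeds $n+s-2$ by roughly $(\sqrt2-1)n/2$ when $k$ is small compared with $n$. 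More fundamentally, $q(H_s)$ itself lies above the clique bound $2(n-s_0+2k-2)$, so no upper bound on it can beat that bound. For Part~1 take $k=1$, $n=9$, $s=5$: $q(K_5\vee 4K_1)=(17+\sqrt{129})/2\approx 14.18>14=2n-4$, while $q(K_2\vee(K_6\cup K_1))\approx 14.32$. For Part~2 take $k=1$, $\delta=100$, $n=651$, $s=326$: $q(H_s)=(1301+\sqrt{845001})/2\approx 1110.1>1102=2(n-\delta+2k-2)$, while $q(H_\delta)\approx 1125.3$. The bounds $\max_v(d(v)+m(v))$ and $2e(H_s)/(n-1)+n-2$ give $14.5$ and $1138$ in these examples, which exceed even $q(H_{s_0})$.

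What is needed, and what the paper's strategy does for all $s\ge s_0+1$ (with a separate partition when $n=2s-2k+1$), is to compare characteristic polynomials on a whole ray. Show $\varphi_s(x)>\varphi_{s_0}(x)$ for all $x\ge L$, where $L<q(H_{s_0})$ is the clique bound. Since $\varphi_{s_0}\ge 0$ on $[q(H_{s_0}),\infty)$, $\varphi_s$ then has no root there, so $q(H_s)<q(H_{s_0})$. This also makes your derivative/interlacing step unnecessary. In the $n=9$ example the difference of the paper's cubics is $x^2-11x-32$, which is positive on $[14,\infty)$ even though $q(H_5)>14$; this is why the polynomial comparison succeeds where eigenvalue bounds cannot.

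Two smaller corrections to your middle-range sketch. First, ``convex, so check the endpoints'' is backwards: convexity bounds a function from above by its endpoint values. Second, the difference vanishes at $s=s_0$, so you must first divide out $s-s_0$. The cofactor is then a concave quadratic in $s$ (leading coefficient $-2$), and endpoint checks become legitimate.

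Finally, the corner you flag is tighter than even the paper's write-up allows. For $n=2k+7$ and $s=2k+3$, the difference $f_{\pi',1}-f_2$ equals $14-4k$ at $x=2n-4$, which is negative for $k\ge 4$. The paper's chain substitutes $s=2k+6$, although only $s\ge 2k+3$ follows from the hypotheses. So at this corner you need a better lower bound for $q(H_{2k})$ than $2n-4$. For example, reducing $\varphi_{2k}$ modulo the quadratic whose larger root is $q(H_{2k+3})$ gives $\varphi_{2k}\bigl(q(H_{2k+3})\bigr)=24k+48-6\,q(H_{2k+3})$. This is negative because $q(H_{2k+3})>4k+10$.
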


\begin{theorem}\label{thm:mu-ext}
Let $k$ be an integer, and let $G$ be a connected graph of order $n$ with minimum degree $\delta$. If $n \geq 12\delta - 2k + 1$, $\delta \geq 2k + 1$ and
\[
\mu(G) \leq \mu(K_{\delta} \vee (K_{n - 2\delta + 2k - 1} \cup (\delta - 2k + 1)K_1)),
\]
then $G$ is fractional $k$-extendable, unless $G \cong K_{\delta} \vee (K_{n - 2\delta + 2k - 1} \cup (\delta - 2k + 1)K_1)$.
\end{theorem}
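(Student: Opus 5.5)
We argue by contradiction. Suppose $G$ satisfies the hypotheses but is not fractional $k$-extendable. We would use the characterization of Ma and Liu \cite{MR2102027}: $G$ is fractional $k$-extendable if and only if $i(G-S)\le |S|-2k$ for every $S\subseteq V(G)$ such that $G[S]$ contains a $k$-matching. Here $i(\cdot)$ denotes the number of isolated vertices.

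Failure of this condition gives a set $S$ with $|S|=s$, with $G[S]$ containing a $k$-matching, and with $i(G-S)\ge s-2k+1$. Let $I$ be a set of $s-2k+1$ isolated vertices of $G-S$. Every vertex of $I$ has all its neighbours in $S$, so $s\ge\delta$. Adding edges never increases the distance spectral radius; strictly, $\mu$ decreases when a non-edge is added to a connected graph. So we may saturate $G$ while keeping $I$ isolated in $G-S$. This gives
\[
\mu(G)\ge \mu\bigl(K_s\vee (K_{n-2s+2k-1}\cup (s-2k+1)K_1)\bigr)=:\mu(G_s),
\]
with equality only if $G\cong G_s$. Here $n-2s+2k-1\ge 0$ is needed, and we treat the extreme case where the clique part is empty separately. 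The proof then reduces to showing $\mu(G_s)>\mu(G_\delta)$ for every admissible $s\ge\delta+1$, with $s\le (n+2k-1)/2$. Combined with the hypothesis $\mu(G)\le\mu(G_\delta)$, this forces $s=\delta$ and $G\cong G_\delta$, which is the exceptional graph.

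For the comparison we split into two ranges. In the \emph{large} range, say $s\ge$ a constant multiple of $\delta$ or $s$ close to $n/2$, we use the Rayleigh quotient with the all-ones vector. This gives $\mu(G_s)\ge 2W(G_s)/n$, where
\[
W(G_s)=\binom{n}{2}+\binom{s-2k+1}{2}+(s-2k+1)(n-2s+2k-1),
\]
since distances are $1$ or $2$. We need an upper bound on $\mu(G_\delta)$. A clean way is via the quotient matrix of the equitable partition $(S, K, I)$: $\mu(G_\delta)$ equals the largest root of a cubic $f_\delta(x)$ that we write down explicitly. We bound that root by roughly $n-1+\frac{2(\delta-2k+1)(n-\delta)}{n}$ plus a small error, using $f_\delta(x)>0$ at a suitable point. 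Then $2W(G_s)/n$ exceeds this bound when $n\ge 12\delta-2k+1$.

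In the \emph{small} range $\delta+1\le s$ below that threshold, both graphs have equitable quotient matrices, so $\mu(G_s)$ and $\mu(G_\delta)$ are the largest roots of cubics $f_s$ and $f_\delta$. We would show $f_s(\mu(G_\delta))<0$, perhaps by writing $f_s(x)-f_\delta(x)$ as $(s-\delta)$ times a polynomial that is negative on $[n-1,\infty)$. We could instead compare via the Perron vector of $G_\delta$: the difference $\mathbf{x}^T(\mathcal{D}(G_s)-\mathcal{D}(G_\delta))\mathbf{x}$ is positive after moving vertices from $K$ into $S$ and $I$. This needs estimates on the ratios of Perron-vector entries on $S$, $K$ and $I$, which come from the eigen-equations.

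The main obstacle is this uniform comparison across all $s$. The cubic-root inequalities get messy, and the constant $12$ in $n\ge 12\delta-2k+1$ presumably comes from where the two ranges must overlap. I would first try the Perron-vector difference argument throughout, and fall back on explicit root bounds only at the boundary cases.
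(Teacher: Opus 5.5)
Your reduction is exactly the paper's: Ma--Liu's criterion, $s\ge\delta$, saturating $G$ to $G_s=K_s\vee(K_{n-2s+2k-1}\cup(s-2k+1)K_1)$, and the equality bookkeeping. The gap is the comparison $\mu(G_s)>\mu(G_\delta)$ for $\delta+1\le s\le (n+2k-1)/2$. This is the whole substance of the proof, you leave it as a menu of options, and the one concrete estimate you give is wrong.

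Put $t=\delta-2k+1$, $m=n-2\delta+2k-1$, $\mu=\mu(G_\delta)$. Let the Perron vector of $\mathcal{D}(G_\delta)$ take the values $a,b,c$ on $S,K,I$; it is constant on each part. Subtracting eigen-equations gives $(\mu+1)(b-a)=tc$ and $(\mu+2)c=(\mu+m+1)b$. Hence
\[
a<b,\qquad \frac{c}{b}=\frac{\mu+m+1}{\mu+2}<2,\qquad \mu-(n-1)=\delta\Bigl(\frac{a}{b}-1\Bigr)+t\Bigl(\frac{2c}{b}-1\Bigr).
\]
So $\mu(G_\delta)<n-1+3t$, and $(\mu(G_\delta)-n+1)/t\to 3$ as $n/\delta\to\infty$. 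The Perron vector is nearly twice as large on $I$ as elsewhere, so the all-ones vector is a poor test vector. The ``small error'' on top of your bound $n-1+2t(n-\delta)/n$ is therefore of order $t$, as large as the main term. With the correct bound, the Wiener estimate $\mu(G_s)\ge n-1+(s-2k+1)(2n-3s+2k-2)/n$ only works for $s$ with, roughly, $(s-2k+1)(2n-3s+2k-2)>3tn$, so your two ranges would need recalibrating.

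Your cubic-difference option is what the paper actually does, and for all $s$, not just a small range. It writes $\varphi_{B_3}-\varphi_{B_1}=(s-\delta)h(x)$ and shows $h>0$ beyond the threshold $n-\delta+2k+3$, which comes from the Wiener bound on $\mu(G_\delta)$. It splits into subcases at $s=6\delta-10k$ and uses a separate quotient matrix for $n=2s-2k+1$. That split, together with $s\ge 6\delta$ in the last case, is where the constant $12$ enters.

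The Perron-vector route you say you would try first does close the gap, uniformly and more simply than the paper. Realize $G_s$ on $V(G_\delta)$ by moving disjoint sets $D_S,D_I\subseteq K$ with $|D_S|=|D_I|=d=s-\delta$ into $S$ and $I$ respectively. This is possible because $m\ge 2d$ is equivalent to $n\ge 2s-2k+1$. Let $K_s=K\setminus(D_S\cup D_I)$, so $|K_s|=m_s=n-2s+2k-1$. The only distances that change are:
\begin{itemize}
\item pairs inside $D_I$, and pairs between $D_I$ and $K_s$, which go from $1$ to $2$;
\item pairs between $D_S$ and $I$, which go from $2$ to $1$.
\end{itemize}
Therefore, using $c<2b$,
\[
\mathbf{x}^{\top}\bigl(\mathcal{D}(G_s)-\mathcal{D}(G_\delta)\bigr)\mathbf{x}=d\,b\,\bigl[(2m_s+d-1)b-2tc\bigr]>d\,b^{2}\,(2n-3s-5\delta+12k-7).
\]
Since $2s\le n+2k-1$ and $n\ge 12\delta-2k+1$, the last factor is at least $\delta+8k-5>0$. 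The Rayleigh quotient then gives $\mu(G_s)>\mu(G_\delta)$ for every $s\ge\delta+1$ at once. This includes the degenerate case $m_s=0$, and needs no range split and no characteristic polynomials; in fact $n\ge 10\delta-18k+11$ would suffice for this step. Carrying out exactly this computation, in particular the ratio bound $c<2b$ from the eigen-equations, is what your proposal is missing.
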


\section{Preliminaries}

\begin{lemma}[\cite{MR2102027}]\label{lem:k-ext-condition}
Let $k$ be a positive integer, and let $G$ be a graph with a $k$-matching. Then $G$ is fractional $k$-extendable if and only if
\[
i(G - S) \leq |S| - 2k
\]
for every vertex set $S \subseteq V(G)$ with $G[S]$ containing a $k$-matching.
\end{lemma}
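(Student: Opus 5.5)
This lemma is the Ma--Liu characterization of fractional $k$-extendable graphs. I would prove it from the fractional Tutte criterion: a graph $H$ has a fractional perfect matching if and only if $i(H - T) \leq |T|$ for every $T \subseteq V(H)$. The idea is to translate ``the $k$-matching $M$ extends to a fractional perfect matching $h$ with $h(e)=1$ on $M$'' into ``$G - V(M)$ has a fractional perfect matching''. This translation is exact. If $h(e)=1$ for $e \in M$, the vertex constraints force $h$ to vanish on every other edge at $V(M)$, so $h$ restricted to $G - V(M)$ is a fractional perfect matching there. Conversely, any fractional perfect matching of $G - V(M)$, extended by $1$ on $M$ and $0$ elsewhere, gives the required $h$. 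Hence $G$ is fractional $k$-extendable if and only if $G - V(M)$ satisfies the fractional Tutte condition for every $k$-matching $M$.

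\emph{Sufficiency.} Assume $i(G-S)\le |S|-2k$ whenever $G[S]$ contains a $k$-matching. Fix a $k$-matching $M$ and any $T \subseteq V(G)\setminus V(M)$, and put $S = T \cup V(M)$. Then $G[S]$ contains $M$ and $G - S = (G - V(M)) - T$, so
\[ i\bigl((G - V(M)) - T\bigr) = i(G-S) \le |S| - 2k = |T|. \]
By the fractional Tutte criterion, $G - V(M)$ has a fractional perfect matching, and $G$ is fractional $k$-extendable.

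\emph{Necessity.} Suppose $S$ is a set with $G[S]$ containing a $k$-matching $M$ but $i(G-S) > |S| - 2k$. Let $T = S \setminus V(M)$, so $|T| = |S| - 2k$ and $(G - V(M)) - T = G - S$. Then
\[ i\bigl((G-V(M)) - T\bigr) > |T|, \]
so $G - V(M)$ has no fractional perfect matching. Directly: the isolated vertices of $G-S$ have all their neighbours inside $T$ (within $G - V(M)$), and summing the weights at these vertices forces a total weight of more than $|T|$ on edges into $T$, which is impossible. Therefore $M$ does not extend, contradicting fractional $k$-extendability.

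The only delicate point is the fractional Tutte theorem itself. If it is not taken as known, I would derive it from LP duality, or from the standard fact that a graph has a fractional perfect matching if and only if it has a spanning subgraph whose components are $K_2$'s and odd cycles, combined with Hall's theorem on the bipartite double cover. The necessity direction of that theorem is the short counting argument above. Everything else is the bookkeeping identification $S = T \cup V(M)$.
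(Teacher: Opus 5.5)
Your proof is correct. The paper gives no argument of its own for this lemma; it is quoted from Ma and Liu, so there is nothing internal to compare with.

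What you wrote is the standard derivation. First, the exact equivalence ``$M$ lies in a fractional perfect matching $h$ with $h\equiv 1$ on $M$'' $\Leftrightarrow$ ``$G-V(M)$ has a fractional perfect matching''. Then the fractional Tutte criterion, with $S=T\cup V(M)$ and $T=S\setminus V(M)$ translating between the two families of sets. Here $V(M)\subseteq S$ because $M\subseteq E(G[S])$, which is what gives $|T|=|S|-2k$.

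Your direct counting argument makes necessity self-contained, so only sufficiency depends on the fractional Tutte theorem. The Hall route you sketch for that theorem works, provided you verify Hall's condition $|N(X)|\ge |X|$ for \emph{all} $X\subseteq V$ in the double cover, not just independent ones. This follows in two steps. The independent case $|N(I)|\ge|I|$ is $i(G-S)\le |S|$ with $S=N(I)$. For general $X$, apply it to $I=X\setminus N(X)$, which is independent and satisfies $N(I)\subseteq N(X)\setminus X$.

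One remark on the statement rather than your argument. You are (rightly) using the abstract's definition: a $k$-matching exists and every $k$-matching extends. The introduction's definition also demands $|V(G)|\ge 2k+2$. Under that reading the lemma as stated fails when $|V(G)|=2k$: $K_{2k}$ satisfies the condition, since the only admissible $S$ is $V(G)$, yet it has too few vertices. When $|V(G)|=2k+1$ the condition fails anyway (take $S=V(M)$), so $|V(G)|=2k$ is the only discrepancy. It plays no role in the paper, where $n$ is always much larger than $2k$.
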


\begin{lemma}[\cite{MR3589612}]\label{lem:q-monotonicity}
Let $G$ be a connected graph and $xy \notin E(G)$. If $G + xy$ is connected and $H$ is a proper subgraph of $G$. We have,
\[
q(G + xy) > q(G) > q(H).
\]
\end{lemma}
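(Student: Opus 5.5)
The statement to prove is Lemma~\ref{lem:q-monotonicity}: for connected $G$, $xy\notin E(G)$ and a proper subgraph $H$ of $G$, we have $q(G+xy)>q(G)>q(H)$. The plan is to use the Rayleigh quotient for the signless Laplacian together with Perron--Frobenius theory.

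The starting point is the identity $x^{T}Q(G)x=\sum_{uv\in E(G)}(x_u+x_v)^2$. It shows two things. First, $Q(G)$ is a nonnegative symmetric matrix. Second, adding an edge can only increase the quadratic form for a nonnegative vector $x$.

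For the first inequality I would proceed as follows.
\begin{enumerate}
\item Since $G$ is connected, $Q(G)$ is irreducible. By Perron--Frobenius, $q(G)$ has a unit eigenvector $x$ with all entries strictly positive.
\item Compute
\[
q(G+xy)\ \ge\ x^{T}Q(G+xy)x\ =\ x^{T}Q(G)x+(x_x+x_y)^2\ =\ q(G)+(x_x+x_y)^2\ >\ q(G).
\]
The strict gap comes from the positivity of $x_x$ and $x_y$.
\end{enumerate}
The notation clash between the vertex $x$ and the vector $x$ should be renamed to a vector $\mathbf{z}$ when this is written out.

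For the second inequality, $H$ is a proper subgraph of $G$, so $H$ is obtained from $G$ by deleting edges and/or vertices.
\begin{enumerate}
\item Assume without loss of generality that $H$ has no isolated-vertex issues: pad $H$ with isolated vertices up to $V(G)$. This does not change $q(H)$, because it only adds zero rows and columns while $q\ge 0$.
\item After padding, $Q(H)\le Q(G)$ entrywise, and $Q(H)\ne Q(G)$ since at least one edge is missing. If $H$ omitted only vertices that were isolated in $G$, then $G$ being connected forces $n=1$, which is a trivial case.
\item For a Perron vector $y\ge 0$ of $Q(H)$,
\[
q(H)=y^{T}Q(H)y\le y^{T}Q(G)y\le q(G).
\]
\end{enumerate}

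The main obstacle is strictness in the second inequality, because $y$ may have zero entries when $H$ is disconnected. I would handle this by the standard irreducible Perron--Frobenius fact: if $0\le B\le A$ with $A$ irreducible and $B\ne A$, then $\rho(B)<\rho(A)$. Alternatively, argue directly. Suppose equality $q(H)=q(G)$ held. Then $y$ would attain the Rayleigh maximum for $Q(G)$, so $y$ is an eigenvector of $Q(G)$ and hence strictly positive by irreducibility. Then
\[
y^{T}Q(G)y-y^{T}Q(H)y\ \ge\ (y_u+y_v)^2\ >\ 0
\]
for any edge $uv\in E(G)\setminus E(H)$, which is a contradiction.
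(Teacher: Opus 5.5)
The paper gives no proof of this lemma; it imports it from \cite{MR3589612}. Your proof is correct and self-contained.

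For the first inequality, you combine the identity $\mathbf{z}^{\top}Q(G)\mathbf{z}=\sum_{uv\in E(G)}(z_u+z_v)^2$ with strict positivity of the Perron vector of the irreducible matrix $Q(G)$. For the second, you pad $H$ with isolated vertices so that $Q(H)\le Q(G)$ entrywise, and then use the Rayleigh quotient. In the equality case, the Perron vector of $Q(H)$ becomes a nonnegative top eigenvector of $Q(G)$, hence strictly positive, which gives strictness. You also correctly rule out the degenerate case $E(H)=E(G)$: it cannot occur for connected $G$ on at least two vertices. This is the standard Perron--Frobenius monotonicity argument ($0\le B\le A$, $B\ne A$, $A$ irreducible $\Rightarrow \rho(B)<\rho(A)$) that underlies the cited result.

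Two small remarks. The hypothesis that $G+xy$ is connected is automatic and never used. Also, the first inequality is a special case of the second, since $G$ is a proper spanning subgraph of the connected graph $G+xy$. So proving only the second inequality would have sufficed.
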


\begin{lemma}[\cite{MR932967}]\label{lem:mu-monotonicity}
Let $e$ be an edge of a connected graph $G$. If $G - e$ is connected, then $\mu(G) < \mu(G - e)$.
\end{lemma}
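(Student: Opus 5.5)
The last statement is Lemma~\ref{lem:mu-monotonicity}: if $e$ is an edge of a connected graph $G$ and $G-e$ is connected, then $\mu(G)<\mu(G-e)$. The plan is to compare distance matrices entrywise and use Perron--Frobenius.

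\emph{Step 1: entrywise comparison.} Set $H=G-e$ with $e=uv$. Both graphs are connected on the same vertex set, so $\mathcal{D}(G)$ and $\mathcal{D}(H)$ are finite symmetric nonnegative matrices. Every path in $H$ is also a path in $G$, hence $d_G(x,y)\le d_H(x,y)$ for all $x,y$, that is, $\mathcal{D}(G)\le\mathcal{D}(H)$ entrywise. For the pair $u,v$ we have $d_G(u,v)=1$. In $H$ the vertices $u,v$ are non-adjacent but still connected, so $d_H(u,v)\ge 2$. Thus the inequality is strict in the $(u,v)$ and $(v,u)$ entries.

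\emph{Step 2: Perron vector.} Since $G$ is connected, every off-diagonal entry of $\mathcal{D}(G)$ is positive, so $\mathcal{D}(G)$ is irreducible. Perron--Frobenius then gives a unit eigenvector $x$ for $\mu(G)$ with all entries strictly positive. By the Rayleigh quotient,
\[
\mu(H)\ \ge\ x^{T}\mathcal{D}(H)x\ \ge\ x^{T}\mathcal{D}(G)x+2\bigl(d_H(u,v)-1\bigr)x_ux_v\ \ge\ \mu(G)+2x_ux_v\ >\ \mu(G).
\]
The middle inequality drops all other differences $d_H-d_G\ge 0$. The last step uses $d_H(u,v)-1\ge 1$ and $x_u,x_v>0$. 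This gives the claim.

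The only step needing care is strict positivity of the Perron vector, which is what turns the weak entrywise inequality into a strict one. It follows from irreducibility: a nonnegative eigenvector $x$ with $x_w=0$ would force $\sum_j d_{wj}x_j=0$. Every $d_{wj}$ with $j\ne w$ is positive, so this makes $x=0$, a contradiction.
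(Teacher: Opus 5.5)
Your proof is correct. The paper gives no proof of its own and only cites the result, which rests on the same mechanism you use: $\mathcal{D}(G)\le\mathcal{D}(G-e)$ entrywise with strict inequality at the $(u,v)$ entry, together with strict monotonicity of the Perron root of an irreducible nonnegative matrix, which you verify directly via the Rayleigh quotient and the strictly positive Perron vector (valid because distance matrices are symmetric).
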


Let $M$ be a real matrix of order $n$, and let $\mathcal{N} = \{1, 2, \dots, n\}$. Consider a partition $\pi: \{\mathcal{N}_1, \mathcal{N}_2 , \dots, \mathcal{N}_r\}$ of $\mathcal{N}$. With respect to $\pi$, the matrix $M$ can be written in block form as
\begin{align*}
M = \begin{pmatrix}
		M_{11} & M_{12} & \cdots & M_{1r}\\
		M_{21} & M_{22} & \cdots & M_{2r}\\
		\vdots & \vdots & \ddots & \vdots\\
		M_{r1} & M_{r2} & \cdots & M_{rr}\\
	\end{pmatrix},
\end{align*}
where $M_{ij}$ is the submatrix (block) formed by the rows in $\mathcal{N}_i$ and the columns in $\mathcal{N}_j$. Let $b_{ij}$ be the average sum of rows in block $M_{ij}$, \ie $b_{ij}$ be the sum of all entries in $M_{ij}$ divided by the number of rows in $M_{ij}$. The matrix $M_{\pi} = (b_{ij})$ is called the \emph{quotient matrix} of $M$ with respect to $\pi$. The partition $\pi$ is called \emph{equitable} if, for every pair $(i, j)$, each row of the block $M_{ij}$ has the same sum, i.e., $M_{ij}\mathbf{1} = b_{ij}\mathbf{1}$, where $\mathbf{1} = (1, 1, \dots, 1)^{\top}$. In this case, $M_{\pi}$ is called the \emph{equitable quotient matrix} of $M$.

By the Perron-Frobenius theorem \cite{MR2978290}, if $M$ is nonnegative and irreducible, then its spectral radius is a simple eigenvalue with an associated eigenvector $\mathbf{x} = (x_1, x_2, \dots, x_n)^{\top}$ that is unique up to scalar multiplication. This unique positive eigenvector, normalized to unit length, is the Perron vector of $M$.

\begin{lemma}[{\cite[Theorems 2.3 and 2.5]{MR3942724}}]\label{lem:equitable-quotient}
Let $M$ be a real matrix of order $n$ with an equitable partition $\pi$, and let $M_{\pi}$ be the corresponding equitable quotient matrix. Then the eigenvalues of $M_{\pi}$ are also eigenvalues of $M$. Furthermore, if $M$ is nonnegative, then the largest eigenvalues of $M$ and $M_{\pi}$ are equal.
\end{lemma}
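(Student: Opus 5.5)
The plan is to encode the partition $\pi$ in its characteristic matrix. Let $P$ be the $n\times r$ matrix whose $j$-th column is the indicator vector of $\mathcal{N}_j$. Each row of $P$ has exactly one entry equal to $1$, and the columns have disjoint nonempty supports, so $P$ has full column rank $r$. The equitability condition $M_{ij}\mathbf{1}=b_{ij}\mathbf{1}$ for all $i,j$ says precisely that
\[
MP = PM_{\pi}.
\]
Everything will be derived from this single matrix identity.

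\textbf{First assertion.} Suppose $M_{\pi}\mathbf{y}=\lambda\mathbf{y}$ with $\mathbf{y}\neq \mathbf{0}$. Then
\[
M(P\mathbf{y}) = PM_{\pi}\mathbf{y} = \lambda P\mathbf{y}.
\]
Since $P$ has full column rank, $P\mathbf{y}\neq\mathbf{0}$. Hence $\lambda$ is an eigenvalue of $M$, with eigenvector obtained by "lifting" $\mathbf{y}$ to be constant on each part.

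\textbf{Second assertion.} Now let $M\ge 0$. Then every $b_{ij}$ is an average of nonnegative row sums, so $M_{\pi}\ge 0$ as well. For a nonnegative matrix, the spectral radius is itself an eigenvalue with a nonnegative eigenvector; this is the weak Perron--Frobenius theorem, and it needs no irreducibility. It is also the largest real eigenvalue. So I must show $\rho(M)=\rho(M_{\pi})$.

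\emph{The bound $\rho(M_{\pi})\le\rho(M)$.} By the first part, $\rho(M_{\pi})$ is an eigenvalue of $M$, and every eigenvalue of $M$ has modulus at most $\rho(M)$.

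\emph{The bound $\rho(M)\le\rho(M_{\pi})$.} Here I would use a left eigenvector. Since $M^{\top}\ge 0$, there is a vector $\mathbf{z}\ge\mathbf{0}$, $\mathbf{z}\ne\mathbf{0}$, with $\mathbf{z}^{\top}M=\rho(M)\mathbf{z}^{\top}$. Multiplying the identity $MP=PM_{\pi}$ on the left by $\mathbf{z}^{\top}$ gives
\[
(\mathbf{z}^{\top}P)M_{\pi} = \mathbf{z}^{\top}MP = \rho(M)(\mathbf{z}^{\top}P).
\]
The $j$-th entry of $\mathbf{z}^{\top}P$ is the sum of the entries of $\mathbf{z}$ over $\mathcal{N}_j$. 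Because $\mathbf{z}$ is nonnegative and nonzero and the parts cover $\mathcal{N}$, the vector $\mathbf{z}^{\top}P$ is nonzero. Thus $\rho(M)$ is an eigenvalue of $M_{\pi}$, so $\rho(M)\le\rho(M_{\pi})$. Combining the two bounds gives equality.

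\textbf{Main obstacle.} The one delicate point is this reverse inequality: an arbitrary eigenvector of $M$ need not be constant on the parts, so it cannot simply be projected down. Using the left eigenvector together with nonnegativity is what ensures the projected vector $\mathbf{z}^{\top}P$ does not vanish. For the application in this paper, where $M$ is irreducible (an adjacency, signless Laplacian or distance matrix of a connected graph), there is an even simpler route. The lifted vector $P\mathbf{y}$ of the Perron vector of $M_{\pi}$ is a positive eigenvector of $M$, and for an irreducible nonnegative matrix the only eigenvalue admitting a positive eigenvector is $\rho(M)$.
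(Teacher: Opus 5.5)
Your proof is correct. The paper gives no argument for this lemma. It quotes both assertions from \cite{MR3942724} (Theorems 2.3 and 2.5), so there is no internal proof to compare against, and your write-up is a self-contained replacement for the citation.

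As you say, everything rests on the identity $MP=PM_{\pi}$.

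\textbf{First assertion.} Lifting eigenvectors through $P$ gives exactly what is stated: the spectrum of $M_{\pi}$ is contained, as a set, in that of $M$. If you also want multiplicities, the same identity gives them. Complete $P$ to a nonsingular matrix $S=[P\;R]$. Then $S^{-1}MS$ is block upper triangular with $M_{\pi}$ as its leading diagonal block, so the characteristic polynomial of $M_{\pi}$ divides that of $M$.

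\textbf{Second assertion.} The step that needs an idea is $\rho(M)\le\rho(M_{\pi})$. Projecting a nonnegative left Perron vector handles it cleanly: $\mathbf{z}^{\top}P\neq\mathbf{0}$ because its entries sum to $\mathbf{z}^{\top}\mathbf{1}>0$. Only the weak Perron--Frobenius theorem is used, so no irreducibility hypothesis and no perturbation or limiting argument is required.

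\textbf{Irreducible remark.} This is fine, but add one justification for why $M_{\pi}$ has a positive Perron vector $\mathbf{y}$. Since $M\ge 0$, $b_{ij}>0$ whenever $M_{ij}$ has a positive entry. So the digraph of $M_{\pi}$ contains the quotient of the strongly connected digraph of $M$, and $M_{\pi}$ is irreducible. Alternatively, lift any nonnegative eigenvector of $M_{\pi}$ for $\rho(M_{\pi})$: an irreducible nonnegative matrix has a nonnegative eigenvector only for its spectral radius. This irreducible version is all the paper actually uses, since $Q(G)$ and $\mathcal{D}(G)$ are irreducible for connected $G$.
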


For a connected graph $G$ on $n$ vertices, the \emph{Wiener index} is defined as $W(G) = \sum_{i<j} d_{ij}$, where $d_{ij}$ denotes the distance between vertices $i$ and $j$. The following bound follows directly from the Rayleigh quotient \cite{MR2978290}.

\begin{lemma}[\cite{MR2978290}]\label{lem:Wiener-bound}
Let $G$ be a connected graph of order $n$. Then
\[
\mu(G) = \max_{\mathbf{x} \neq \mathbf{0}} \frac{\mathbf{x}^{\top} \mathcal{D}(G)\mathbf{x}}{\mathbf{x}^{\top} \mathbf{x}} \geq \frac{\mathbf{1}^{\top} \mathcal{D}(G)\mathbf{1}}{\mathbf{1}^{\top} \mathbf{1}} = \frac{2W(G)}{n},
\]
where $\mathbf{1} = (1, 1, \dots, 1)^{\top}$ is the all-ones vector.
\end{lemma}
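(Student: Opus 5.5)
The plan is to apply the Rayleigh--Ritz characterization of the largest eigenvalue of a real symmetric matrix to the distance matrix, and then evaluate the quotient at the all-ones vector.

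First I note that $\mathcal{D}(G)$ is real and symmetric, because $d_{i,j}=d_{j,i}$ for every pair of vertices; all of its entries are finite since $G$ is connected. By the spectral theorem, $\mathcal{D}(G)$ has an orthonormal eigenbasis $\mathbf{u}_1,\dots,\mathbf{u}_n$ with eigenvalues $\mu_1\ge\dots\ge\mu_n$.

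For any $\mathbf{x}\neq\mathbf{0}$, write $\mathbf{x}=\sum_i c_i\mathbf{u}_i$. Then
\[
\mathbf{x}^{\top}\mathcal{D}(G)\mathbf{x}=\sum_i \mu_i c_i^2\le \mu_1\sum_i c_i^2=\mu_1\,\mathbf{x}^{\top}\mathbf{x},
\]
and equality holds at $\mathbf{x}=\mathbf{u}_1$. This gives the first equality in the statement, $\mu(G)=\max_{\mathbf{x}\neq\mathbf{0}}\mathbf{x}^{\top}\mathcal{D}(G)\mathbf{x}/\mathbf{x}^{\top}\mathbf{x}$. (Alternatively, one can note that $\mathcal{D}(G)$ is nonnegative and irreducible and use the Perron vector, but the symmetric Rayleigh principle is all that is needed.)

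Next I substitute $\mathbf{x}=\mathbf{1}$, which is nonzero, so the maximum is at least the value of the quotient there. The denominator is $\mathbf{1}^{\top}\mathbf{1}=n$. The numerator $\mathbf{1}^{\top}\mathcal{D}(G)\mathbf{1}$ is the sum of all entries of $\mathcal{D}(G)$. The diagonal entries are $d_{i,i}=0$, and each unordered pair $\{i,j\}$ with $i<j$ contributes $d_{i,j}+d_{j,i}=2d_{i,j}$. Hence the numerator equals $2\sum_{i<j}d_{i,j}=2W(G)$, and the bound $\mu(G)\ge 2W(G)/n$ follows.

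There is no real obstacle here. The only points needing care are checking symmetry, so that the Rayleigh principle applies, and the factor $2$ in counting each unordered pair twice.
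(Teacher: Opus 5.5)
Your proof is correct and follows the same route the paper indicates, which simply cites the Rayleigh quotient: you apply the Rayleigh--Ritz characterization of the largest eigenvalue of the symmetric matrix $\mathcal{D}(G)$, evaluate it at $\mathbf{1}$, and identify $\mathbf{1}^{\top}\mathcal{D}(G)\mathbf{1}=2W(G)$. You also spell out the details the paper leaves implicit: the spectral-theorem justification of the max formula and the factor $2$ from counting each unordered pair twice.
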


\section{Proof of \cref{thm:e-ext}}
Suppose that $G$ is not fractional $k$-extendable. By \cref{lem:k-ext-condition}, there exists a nonempty subset $S$ of $V(G)$ with $|S| \geq 2k$ such that $i(G - S) \geq |S| - 2k + 1$. Then $G$ is a spanning subgraph of $G_1 = K_s \vee (K_{n_1} \cup (s - 2k + 1)K_1)$, where $s \coloneqq |S| \geq 2k$ and $n_1 = n - 2s + 2k - 1 \geq 0$. We conclude that
\begin{align}\label{eq:3.1}
e(G) \leq e(G_1),
\end{align}
with equality if and only if $G \cong G_1$. Notice that $\delta = \delta(G) \leq \delta(G_1) = s$. 

Let $G_2 = K_{2k} \vee (K_{n - 2k - 1} \cup K_1)$. We show that $e(G_1) \leq e(G_2)$, with equality if and only if $G_1 \cong G_2$. If $s = 2k$, then $G_1 = G_2$ and $e(G_1) = e(G_2)$. Now, assume $s \geq 2k + 1$. A simple computation yields
\begin{align*}
e(G_2) - e(G_1) & = \binom{n - 1}{2} + 2k - \binom{n - s + 2k - 1}{2} - s(s - 2k + 1)\\
&=(s - 2k)n + 4ks - 2k^2 + 5k - \tfrac{3s^2}{2} - \tfrac{5s}{2}\\
&\eqqcolon w(n). 
\end{align*}
Then we proceed by the following cases.

\begin{case}
$n = 2s - 2k + 1$.
\end{case}

In this case, $w(n)$ becomes 
\[
w(n) = \tfrac{s^2}{2} + (-2k - \tfrac{3}{2})s + 2k^2 + 3k \eqqcolon h(s).
\]
Since $n = 2s - 2k + 1 \geq 2k + 9$, we have $s \geq 2k + 4$. The symmetry axis of $h(s)$ is $s_0 = 2k + \tfrac{3}{2} < 2k + 4$, so $h(s)$ is increasing for $s \geq 2k + 4$. Thus,
\begin{align*}
h(s) & \geq h(2k + 4) \\
    &= \tfrac{(2k + 4)^2}{2} + (-2k - \tfrac{3}{2})(2k + 4) + 2k^2 + 3k \\
    &= 2 > 0.
\end{align*}
Hence, $w(n) > 0$, implying $e(G_2) > e(G_1)$.

\begin{case}
$n \geq 2s - 2k + 2$.
\end{case}

\begin{subcase}
$s = 2k + 1$.
\end{subcase}

Then
\begin{align*}
w(n) & = (s - 2k)n + 4ks - 2k^2 + 5k - \tfrac{3s^2}{2} - \tfrac{5s}{2} \\
& = (2k + 1 - 2k)n + 4k(2k + 1) - 2k^2 + 5k - \tfrac{3(2k + 1)^2}{2} - \tfrac{5(2k + 1)}{2}\\
&= n - 2k - 4 > 0   \qquad \text{(since $n \geq 2k+9$)}
\end{align*}
Thus, $e(G_{2}) - e(G_{1}) = w(n) > 0$.
\begin{subcase}
$s \geq 2k + 2$.
\end{subcase}

Note that $w(n)$ is linear in $n$ with positive coefficient $s - 2k$, so it is increasing in $n$. For $n \geq 2s - 2k + 2$, 
\begin{align*}
w(n) & \geq w(2s - 2k + 2) \\
    &= (s - 2k)(2s - 2k + 2) + 4ks - 2k^2 + 5k - \tfrac{3s^2}{2} - \tfrac{5s}{2}\\
    &= \tfrac{s^2}{2} + (-2k - \tfrac{1}{2})s + 2k^2 + k \\
    &\eqqcolon h(s).
\end{align*}
The symmetry axis of $h(s)$ is $s_0 = 2k + \tfrac{1}{2} < 2k + 2$, so $h(s)$ is increasing for $s \geq 2k + 2$. Therefore,
\begin{align*}
h(s) &\geq h(2k + 2) \\
    &= \tfrac{(2k + 2)^2}{2} + (-2k - \tfrac{1}{2})(2k + 2) + 2k^2 + k\\
    &= 1 > 0.
\end{align*}
Thus, $w(n) \geq h(s) > 0$ and $e(G_2) > e(G_1)$.

In all cases, we have $e(G_1) < e(G_2)$ when $s \geq 2k + 1$. Combining with \eqref{eq:3.1}, we obtain  
\[
e(G) \leq e(G_1) < e(G_2) = e\left(K_{2k} \vee \left(K_{n - 2k - 1} \cup K_1\right)\right),
\]  
a contradiction.

In the following, we assume $\delta \geq 2k + 1$. Let $G_3 = K_{\delta} \vee (K_{n - 2\delta + 2k - 1} \cup (\delta - 2k + 1)K_1)$, where $n \geq 2\delta - 2k + 1$. We show that $e(G_1) \leq e(G_3)$, with equality if and only if $G_1 \cong G_3$. If $s = \delta$, then $G_1 = G_3$ and $e(G_1) = e(G_3)$. Now assume $s \geq \delta + 1$. Then
\begin{align*}
e(G_3) - e(G_1) & = \binom{n - \delta + 2k - 1}{2} + \delta(\delta - 2k + 1) - \binom{n - s + 2k - 1}{2} - s(s - 2k + 1) \\
& = (s - \delta)(n + 4k - \tfrac{3\delta}{2} - \tfrac{3s}{2} - \tfrac{5}{2}) \\
& = \tfrac{1}{2}(s - \delta)(2n + 8k - 3\delta - 3s - 5)
\end{align*}
Since $s \geq \delta + 1$, we have $s - \delta > 0$. Moreover, as $n \geq 6\delta$ and $n \geq 2s - 2k + 1$, it follows that  
\[
2n + 8k - 3\delta - 3s - 5 = \tfrac{3}{2}(n - 2s + 2k - 1) + \tfrac{1}{2}(n - 6\delta) + \tfrac{1}{2}(10k - 7) > 0.
\]  
Hence, $e(G_3) > e(G_1)$. Combining with \eqref{eq:3.1}, we get
\[
e(G) \leq e(G_1) < e(G_3) = e(K_{\delta} \vee (K_{n - 2\delta + 2k - 1} \cup (\delta - 2k + 1)K_1)),
\]
a contradiction.
\setcounter{case}{0}

\section{Proof of \cref{thm:q-ext}}
In this section, define $G_{1} \coloneqq K_s \vee (K_{n - 2s + 2k - 1} \cup (s - 2k + 1)K_1)$, and $G_{2} \coloneqq K_{2k} \vee (K_{n - 2k - 1}\cup K_1)$.

The equitable quotient matrix of $Q(G_2)$ with respect to the partition $V(G_2) = V(K_{2k}) \cup V(K_{n - 2k - 1}) \cup V(K_1)$ is
\[
B_2 = \begin{pmatrix}
         n + 2k - 2 & n - 2k - 1 & 1\\
            2k & 2n - 2k - 4 & 0\\
            2k & 0 & 2k\\
      \end{pmatrix},
\]
and its characteristic polynomial is 
\begin{equation}\label{CP-2}
f_2(x) = x^3 + (6 - 2k - 3n)x^2 + (6nk - 16k + 2n^2 - 8n + 8)x + (-4n^2 + 20n - 24)k.
\end{equation}
By \cref{lem:equitable-quotient}, $q(G_2)$ is the largest root of $f_2(x) = 0$.

$\bullet$ Assume $n \geq 2s - 2k + 2$. Let $\pi: V(G_1) \to (V(K_s), V(K_{n - 2s + 2k - 1}), V((s - 2k + 1)K_1))$. With respect to this partition $\pi$, the equitable quotient matrix of $Q(G_1)$ is
\[
B_{\pi, 1} = \begin{pmatrix}
    n + s - 2 & n - 2s + 2k - 1 & s - 2k + 1\\
            s & 2n - 3s + 4k - 4 & 0\\
            s & 0 & s\\
      \end{pmatrix},
\]
and its characteristic polynomial is 
\begin{align*}
f_{\pi, 1}(x) &= x^3 + (s - 3n - 4k + 6)x^2 + (-4s^2 + (8k + n - 4)s + 4kn - 8n - 8k + 2n^2 + 8)x\\
&\quad - 2s^3 + (8k + 4n - 10)s^2 + (- 8k^2 - 8kn + 20k - 2n^2 + 10n - 12)s.
\end{align*}
By \cref{lem:equitable-quotient}, $q(G_1)$ is the largest root of $f_{\pi, 1}(x) = 0$.

$\bullet$ Assume $n = 2s - 2k + 1$. Let $\pi': V(G_1) = (V(K_s), V((s - 2k)K_{1}), V(K_1))$.  With respect to this partition, the equitable quotient matrix of $Q(G_1)$ is
\[
B_{\pi', 1} = \begin{pmatrix}
  3s - 2k - 1 & s - 2k & 1\\
            s & s     & 0\\
            s & 0     & s\\
      \end{pmatrix},
\]
and its characteristic polynomial is
\[
f_{\pi', 1}(x) = x^3 + (2k - 5s + 1)x^2 + (6s^2 - 2ks - 3s)x - 2s^3 + 2s^2.
\]
By \cref{lem:equitable-quotient}, $q(G_1)$ is the largest root of $f_{\pi', 1}(x) = 0$.

\begin{lemma}\label{lem:q1q2}
Let $n, k, s$ be positive integers with $n \geq \max\{2s - 2k + 1,\, 2k + 6\}$ and $s \geq 2k$. Then
\[
q(K_s \vee (K_{n - 2s + 2k - 1} \cup (s - 2k + 1)K_1)) \leq q(K_{2k} \vee (K_{n - 2k - 1} \cup K_1)),
\]
where equality holds if and only if $s = 2k$.
\end{lemma}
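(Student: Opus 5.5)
Our plan is to compare the largest roots of the characteristic polynomials already computed. We write $\theta = q(G_2)$ for the largest root of $f_2$. For $s = 2k$ the graphs $G_1$ and $G_2$ coincide, so we may assume $s \geq 2k+1$. It then suffices to show that $f_{\pi,1}(x) > 0$ (respectively $f_{\pi',1}(x) > 0$) for all $x \geq \theta$. Since these are monic cubics, this gives $q(G_1) < \theta$.

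The first step is a lower bound on $\theta$. Because $K_{n-1}$ is a proper subgraph of $G_2$, \cref{lem:q-monotonicity} gives $\theta > q(K_{n-1}) = 2n-4$. We would also record $\theta < 2n-2 = q(K_n)$. In the same way, $G_1$ contains $K_{n-s+2k-1}$ and is a proper subgraph of $K_n$. Moreover every vertex of $G_1$ has degree at most $n-1$, and the $s-2k+1$ isolated-side vertices have degree $s$. These facts give the crude estimate $q(G_1) \le \max\{\dots\}$; it may be enough in some ranges, for example when $s$ is close to $n/2$.

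The main step is the case $n \geq 2s-2k+2$. We form $g(x) = f_{\pi,1}(x) - f_2(x)$, which is a quadratic in $x$:
\[
g(x) = (s-2k)x^2 + c_1 x + c_0,
\]
where $c_1, c_0$ are read off from the displayed polynomials. We expect $s-2k$ to factor out of all coefficients, since $g$ vanishes identically at $s=2k$. So $g(x) = (s-2k)\,\tilde g(x)$, and at $x = \theta$ we have $f_{\pi,1}(\theta) = g(\theta)$. We then show $\tilde g(x) > 0$ for $x \ge 2n-4$. To do this, we check that the axis of symmetry of $\tilde g$ lies left of $2n-4$ and that $\tilde g(2n-4) > 0$, using $2k+1 \le s \le (n+2k-2)/2$ and $n \ge 2k+6$. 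This yields $f_{\pi,1}(\theta) > 0$. To conclude that $\theta$ exceeds the largest root of $f_{\pi,1}$, we must also rule out $\theta$ lying between the two smaller roots. For this we show that $f_{\pi,1}'(x) > 0$ for $x \ge 2n-4$, or equivalently that the second-largest root of $B_{\pi,1}$ is below $2n-4$; the latter can be checked by bounding $f_{\pi,1}$ at a point such as $x = n+s-2$ or via $f_{\pi,1}(2n-4) < 0$ being impossible together with positivity beyond.

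The degenerate case $n = 2s-2k+1$ is handled in the same way using $f_{\pi',1}$, where $s = (n+2k-1)/2$. This is a one-parameter family, and after substituting $s$ the difference $f_{\pi',1}(x) - f_2(x)$ becomes an explicit quadratic in $x$ with coefficients in $n,k$, whose positivity on $[2n-4,\infty)$ we verify directly. We expect the main obstacle to be the sign analysis of $\tilde g$ on $[2n-4,\infty)$ over the whole range of $s$. The endpoint values are polynomials of degree two or three in $s$ and $n$, and the small cases near $n = 2k+6$ may need separate handling. If the bound $\theta > 2n-4$ proves too weak there, we would first try sharpening it by evaluating $f_2(2n-3)$ to get $\theta > 2n-3$, or something close to it.
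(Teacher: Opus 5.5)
Your route is the paper's: compare the quotient-matrix polynomials with $f_2$, use $\theta=q(G_2)>2n-4$, divide the difference by its leading coefficient, then check the vertex and the value at $2n-4$. For $n\ge 2s-2k+2$ this goes through exactly as in the paper.

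The extra step you plan there, excluding $\theta$ between the two smaller roots via $f_{\pi,1}'>0$, is unnecessary. Since $\theta$ is the largest root of the monic cubic $f_2$, we have $f_2\ge 0$ on $[\theta,\infty)$. So $\tilde g>0$ on $[2n-4,\infty)$ already gives $f_{\pi,1}=f_2+(s-2k)\tilde g>0$ on all of $[\theta,\infty)$.

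The genuine gap is the case $n=2s-2k+1$. There $n\ge 2k+6$ only yields $s\ge 2k+3$. For $s=2k+3$ (so $n=2k+7$) the difference is negative at the left endpoint: one computes $f_{\pi',1}(2n-4)-f_2(2n-4)=14-4k<0$ for $k\ge 4$. For example, with $k=4$, $n=15$ one gets $f_{\pi',1}(26)=-210<-208=f_2(26)$. So ``verify positivity on $[2n-4,\infty)$ directly'' cannot succeed. The paper's proof has the same defect: it bounds at $s=2k+6$, which is not available. The cases $s\ge 2k+4$ are fine.

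Your fallback does not repair this. For $n=2k+7$ we have $f_2(2n-3)=-4k^2+10k+66$, which is positive when $k\le 5$, so $\theta>2n-3$ is false there. For $k\ge 16$ the bound is useless anyway, since $f_{\pi',1}(2n-3)-f_2(2n-3)=30-2k<0$. In this case $\theta$ and $q(G_1)$ both lie within $O(1/k)$ of $2n-2$ and differ by only $O(1/k^2)$. Hence any lower bound $L\le\theta$ for which the difference is positive on $[L,\infty)$ must itself be $2n-2-O(1/k)$.

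What works is to test at $x_1=q(G_1)$ itself. Write $f_{\pi',1}(x)=(x-s)p(x)$ with $p(x)=x^2-(4s-2k-1)x+2s^2-2s$. Substituting $x_1^2=(4s-2k-1)x_1-2s^2+2s$ gives
\[
\frac{f_{\pi',1}(x_1)-f_2(x_1)}{s-2k-2}=2(s-2k)\Bigl(x_1-2s+2k-\frac{2(k+1)}{s-2k-2}\Bigr).
\]
Since $p(2s+2)=-2\bigl(s(s-2k)-2k-3\bigr)<0$, we have $x_1>2s+2$. Therefore the bracket exceeds $2k+2-\frac{2(k+1)}{s-2k-2}\ge 0$. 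It follows that $f_2(x_1)=-(s-2k-2)\cdot(\text{positive})<0$, and hence $q(G_1)=x_1<\theta$.
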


\begin{proof}
Since $G_1 \cong G_2$ when $s = 2k$, we may assume $s \geq 2k + 1$.

\begin{case}
$n \geq 2s - 2k + 2$.
\end{case}

Since $K_{n - 1}$ is a proper subgraph of $G_2$, it follows from \cref{lem:q-monotonicity} that
\[
q(G_2) > q(K_{n - 1}) = 2n - 4.
\]
Next, we claim $f_{\pi, 1}(x) > f_2(x)$ for all $x > 2n - 4$. 

To compare $f_{\pi, 1}$ and $f_2$, define
\[
g(x) \coloneqq \frac{f_{\pi, 1}(x) - f_2(x)}{s - 2k}.
\]
A straightforward simplification gives
\begin{align*}
    g(x) = x^2 + (n - 4s - 4)x - 2n^2 - 12 + 10n - 10s + 4ns + 4ks - 2s^2.
\end{align*}
We show that $g(x) > 0$ for all $x > 2n - 4$. Since $n \geq 2s - 2k + 2 = (s - 2k - 1) + (s + 3) \geq s + 3$, the symmetry axis of $g(x)$ is to the left of $2n - 4$, hence $g(x)$ increases in $(2n - 4, + \infty)$.
Thus,
\[
g(x) > g(2n - 4) = 4n^2 - (4s + 18)n - 2s^2 + 6s + 4ks + 20 \eqqcolon h(n).
\]
Now, it suffices to prove $h(n) \geq 0$. 

If $s = 2k + 1$, the symmetry axis of $h(n)$ satisfies $\frac{4s + 18}{8} < 2k + 5 \leq n$, so
\begin{align*}
h(n) &= 4n^2 - (4s + 18)n - 2s^2 + 6s + 4ks + 20 \\
     &\geq 4(2k + 5)^2 - (4(2k + 1) + 18)(2k + 5) + 6(2k + 1) + 4k(2k + 1) - 2(2k + 1)^2 + 20 \\
     &= 4k + 14 > 0.
\end{align*}

If $s \geq 2k + 2$, then $\frac{4s + 18}{8} \leq 2s - 2k + 2 \leq n$ and
\begin{align*}
h(n) &= 4n^2 - (4s + 18)n + 6s + 4ks - 2s^2 + 20\\
     &\geq 4(2s - 2k + 2)^2 - (4s + 18)(2s - 2k + 2) + 6s + 4ks - 2s^2 + 20\\
     &= 6s^2 + (-20k - 6)s + 36k + 4(2k - 2)^2 - 16\\
     &\eqqcolon h_{1}(s).   
\end{align*}
The symmetry axis of $h_{1}(s)$ satisfies $\frac{20k + 6}{12} < 2k + 2$, so
\begin{align*}
    h_{1}(s)&= 6s^2 + (-20k - 6)s + 36k + 4(2k - 2)^2 - 16\\
    &\geq 6(2k + 2)^2 + (-20k - 6)(2k + 2) + 36k + 4(2k - 2)^2 - 16 \\
    &= 12 > 0.
\end{align*}    
Combining these results, we have $h(n) > 0$ for all $n \geq \max\{2s - 2k + 2,\, 2k + 5\}$, and thus $f_{\pi, 1}(x) > f_2(x)$ for all $x > 2n - 4$.

Since $q(G_2) > 2n - 4$, we have $f_{\pi, 1}(x) > f_2(x) \geq 0$ for all $x \geq q(G_2)$. Therefore, $f_{\pi, 1}(x) = 0$ has no roots in the interval $[q(G_2), + \infty)$, implying $q(G_1) < q(G_2)$.

\begin{case}
$n = 2s - 2k + 1$.
\end{case}
In this case, $G_{1} = K_{s} \vee (s - 2k + 1)K_{1}$ and $G_{2} = K_{2k} \vee (K_{2s - 4k} \cup K_{1})$. Substituting $n = 2s - 2k + 1$ into \eqref{CP-2}, we obtain
\begin{align*}
f_2(x) &= x^3 + (4k - 6s + 3)x^2 - (4k^2 + 4ks + 2k - 8s^2 + 8s - 2)x \\
       &\quad - 16ks^2 + 32k^2s - 24k^2 - 16k^3 - 8k + 24ks.
\end{align*}

Since $n = 2s - 2k + 1 \geq 2k + 6$, we have $s > 2k + 2$. Next, we claim $f_{\pi', 1}(x) > f_2(x)$ for all $x \in [ 4s - 4k - 2, + \infty )$. Define 
\[
w(x) \coloneqq \frac{f_{\pi', 1}(x) - f_2(x)}{s - 2k - 2}.
\]
Then
\[
w(x) = x^2 - (2s + 2k - 1)x - 2s^2 + (12k - 2)s - 8k^2 - 4k - 4 - \frac{8(k + 1)}{s - 2k - 2}.
\]
The symmetry axis $x = \frac{2s + 2k - 1}{2}$ is less than $4s - 4k - 2$, which implies that $w(x)$ is increasing on $[4s - 4k - 2, + \infty)$, additionally $\frac{10 + 20k}{12} < 2k + 6$.
Thus
\begin{align*}
w(x) &> w(4s - 4k - 2) \\
     &= 6s^2 - (20k + 10)s + 16k^2 + 12k - 2 - \tfrac{8(k + 1)}{s - 2k - 2}\\
     &\geq 6s^2 - (20k + 10)s + 16k^2 + 12k - 2 - 8(k + 1) \\
     &\geq 6(2k + 6)^2 - (20k + 10)(2k + 6) + 16k^2 + 12k - 2 - 8(k + 1) \\
     &= 8k + 146 > 0.
\end{align*}
Therefore, $f_{\pi', 1}(x) > f_2(x)$ for all $x \in [4s - 4k - 2, +\infty)$.

Since $q(G_2) > q(K_{2s - 2k}) = 4s - 4k - 2$, we have $f_{\pi', 1}(x) > f_2(x) \geq 0$ for all $x \geq q(G_2)$. Hence, $f_{\pi', 1}(x) = 0$ does not have roots in the interval $[q(G_2 ), + \infty)$, which implies $q(G_1) < q(G_2)$.

Combining both cases and the equality condition $s = 2k$, we conclude that $q(G_1) \leq q(G_2)$, with equality if and only if $G_1 \cong G_2$.
\setcounter{case}{0}
\end{proof}

\begin{lemma}\label{lem:q1q3}
Let $\delta, s, n, k$ be positive integers with $n \geq \max\{2s - 2k + 1, 6.5\delta\}$ and $s \geq \delta \geq 2k + 1$. Then
\[
q(K_s \vee (K_{n - 2s + 2k - 1} \cup (s - 2k + 1)K_1)) \leq q(K_{\delta} \vee (K_{n - 2\delta + 2k - 1} \cup (\delta - 2k + 1)K_1)),
\]
where equality holds if and only if the graphs are isomorphic.
\end{lemma}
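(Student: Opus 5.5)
The plan follows the proof of \cref{lem:q1q2}: compare characteristic polynomials of equitable quotient matrices on an interval that is known to contain $q$ of the extremal graph. Write $G_3 \coloneqq K_{\delta} \vee (K_{n - 2\delta + 2k - 1} \cup (\delta - 2k + 1)K_1)$. If $s = \delta$ then $G_1 \cong G_3$, so we assume $s \geq \delta + 1$ and aim to show $q(G_1) < q(G_3)$.

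\emph{Lower bound and polynomial for $G_3$.} Since $n \geq 6.5\delta$ and $\delta \geq 2k+1$, we have $n - 2\delta + 2k - 1 \geq 1$. So $G_3$ properly contains $K_{n-\delta+2k-1}$, the join of $K_\delta$ with the large clique. By \cref{lem:q-monotonicity},
\[
q(G_3) > q(K_{n-\delta+2k-1}) = 2(n - \delta + 2k - 2) \eqqcolon x_0 .
\]
The characteristic polynomial of the equitable quotient matrix of $Q(G_3)$ is $f_{\pi,1}(x)$ with $s$ replaced by $\delta$; call it $f_3(x)$. By \cref{lem:equitable-quotient}, $q(G_3)$ is the largest root of $f_3$.

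\emph{Case $n \geq 2s - 2k + 2$.} Here $q(G_1)$ is the largest root of $f_{\pi,1}$ with parameter $s$. Viewed as a polynomial in $s$, the difference $f_{\pi,1}(x) - f_3(x)$ vanishes at $s = \delta$, so it factors as $(s-\delta)\,g(x)$. From the explicit formula, $g$ has the form
\[
g(x) = x^2 - (\text{linear in } n,s,\delta,k)\,x + (\text{quadratic in } n,s,\delta,k).
\]
First check that the axis of symmetry of $g$ lies to the left of $x_0$, which should follow from $n \geq 2s-2k+2$ and $n \geq 6.5\delta$. Then $g(x) > g(x_0)$ for $x > x_0$. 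It remains to prove $g(x_0) > 0$. This is a quadratic $h(n)$ in $n$ with positive leading coefficient, subject to the constraints $n \geq 2s - 2k + 2$, $n \geq 6.5\delta$, and $s \geq \delta + 1 \geq 2k+2$.

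I expect this sign check to be the main obstacle. It is exactly where the constant $6.5$ enters, and I would handle it as follows. Locate the axis of symmetry of $h(n)$ and verify it lies below $\max\{2s-2k+2,\,6.5\delta\}$, so $h$ is increasing on the admissible range. Then split according to which lower bound on $n$ is active:
\begin{itemize}
\item if $2s-2k+2 \geq 6.5\delta$, substitute $n = 2s-2k+2$;
\item otherwise substitute $n = 6.5\delta$, with $s$ ranging over $\delta+1 \leq s \leq (6.5\delta+2k-2)/2$.
\end{itemize}
In each subcase the result is a quadratic in $s$ (or, in the second subcase, a concave or convex function of $s$ that is checked at the endpoints of its range). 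Its positivity is then verified using $\delta \geq 2k+1$.

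Once $g > 0$ on $(x_0,\infty)$, we get $f_{\pi,1}(x) > f_3(x) \geq 0$ for all $x \geq q(G_3)$. Hence $f_{\pi,1}$ has no root in $[q(G_3), \infty)$, and $q(G_1) < q(G_3)$.

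\emph{Case $n = 2s-2k+1$.} Now $G_1 = K_s \vee (s-2k+1)K_1$ and $q(G_1)$ is the largest root of $f_{\pi',1}$. Substitute $n = 2s-2k+1$ into $f_3$, so both polynomials depend only on $s,\delta,k$. Here $s = (n+2k-1)/2 \geq (6.5\delta + 2k - 1)/2$, so $s$ is much larger than $\delta$. Consider $f_{\pi',1}(x) - f_3(x)$ on $[x_0,\infty)$, where now $x_0 = 2(2s - \delta - 1)$. As in Case 2 of \cref{lem:q1q2}, try to factor out a positive factor, or otherwise write the difference as a quadratic in $x$ plus a remainder. Then check monotonicity via the axis of symmetry and positivity at $x_0$, using $s \geq 3.25\delta + k - \tfrac12$ to dominate the negative terms. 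This gives $q(G_1) < q(G_3)$ in this case as well.

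Combining the two cases with the case $s = \delta$ yields the inequality, with equality if and only if $s = \delta$, that is, if and only if the two graphs are isomorphic.
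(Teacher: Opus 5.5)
Your plan is correct and is essentially the paper's proof. It uses the same quotient-matrix polynomials, the same factorization $f_{\pi,1}-f_3=(s-\delta)\,h(x)$ with $h$ monic in $x$, and the same lower bound $q(G_3)>2n-2\delta+4k-4$ coming from the clique $K_{n-\delta+2k-1}$. It also uses a two-way split to show $h(2n-2\delta+4k-4)>0$: the paper splits at $s=\tfrac{16\delta}{5}-1$ rather than where $2s-2k+2=6.5\delta$, which is immaterial. Finally, it handles $n=2s-2k+1$ by the analogous comparison on $[4s-2\delta-2,\infty)$.

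One caution for that last case. The correct $(2,2)$ entry of the quotient matrix of $Q(G_3)$ there is $4s-3\delta-2$, whereas the paper writes $4s-3\delta$. With the correct entry, $f_{\pi',1}-f_3$ has leading coefficient $s-\delta-2$ and is not divisible by $s-\delta$. So your fallback is the version that actually goes through: treat the difference directly as a quadratic in $x$, whose axis lies left of $4s-2\delta-2$ and which is positive at that point by $s\ge 3.25\delta+k-\tfrac12$.
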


\begin{proof}
Define $G_3 \coloneqq K_{\delta} \vee (K_{n - 2\delta + 2k - 1} \cup (\delta - 2k + 1)K_1)$. Given that $G_1$ is isomorphic to $G_3$ for $s = \delta$, it suffices to consider the case $s \geq \delta + 1$.

\begin{case}
$n \geq 2s - 2k + 2$.
\end{case}
The equitable quotient matrix of $Q(G_3)$ with respect to the partition $V(K_\delta) \cup V(K_{n - 2\delta + 2k - 1}) \cup V((\delta - 2k + 1)K_1)$ is
\[
B_3 = \begin{pmatrix}
          n + \delta - 2 & n - 2\delta + 2k - 1 & \delta - 2k + 1\\
            \delta & 2n - 3\delta + 4k - 4 & 0\\
            \delta & 0 & \delta\\
      \end{pmatrix},
\]
with characteristic polynomial
\begin{align*}
f_3(x) &= x^3 + (\delta - 3n - 4k + 6)x^2 + (-4\delta^2 + (8k + n - 4)\delta + 4kn - 8n - 8k + 2n^2 + 8)x \\
       &\quad - 2\delta^3 + (8k + 4n - 10)\delta^2 + (-8k^2 - 8kn + 20k - 2n^2 + 10n - 12)\delta.
\end{align*}
By \cref{lem:equitable-quotient}, $q(G_3)$ is the largest root of $f_3(x) = 0$.

Since $K_{n - \delta + 2k - 1}$ is a proper subgraph of $G_3$, it follows from \cref{lem:q-monotonicity} that
\[
q(G_3) > q(K_{n - \delta + 2k - 1}) = 2n - 2\delta + 4k - 4. 
\]
Define $h(x) \coloneqq \frac{f_{\pi, 1}(x) - f_3(x)}{s - \delta}$. We claim $h(x) > 0$ for all $x \geq 2n - 2\delta + 4k - 4$.
\begin{align*}
h(x) &= x^2 + (8k + n - 4 - 4\delta - 4s)x - 2(s^2 + s\delta + \delta^2) \\
     &\quad + (8k + 4n - 10)(s + \delta) - 8k^2 - 8kn + 20k - 2n^2 + 10n - 12.
\end{align*}
Since $2s \leq n + 2k - 1$ and $n \geq 6.5\delta$, the axis of symmetry of $h(x)$ satisfies
\[
\frac{4 + 4\delta + 4s - 8k - n}{2} < 2n - 2\delta + 4k - 4.
\]
Therefore,
\begin{align*}
h(x) &\geq h(2n - 2\delta + 4k - 4) \\
     &= 4n^2 + (28k - 14\delta - 4s - 18)n - 2s^2 + (6\delta + 6 - 8k)s \\
     &\quad + 40k^2 + (-40\delta - 60)k + 30\delta + 10\delta^2 + 20.
\end{align*}

First, suppose $s \geq \frac{16\delta}{5} - 1$. Since
\[
\frac{-28k + 14\delta + 4s + 18}{8} \leq 2s - 2k + 2,
\]
we have
\begin{align*}
h(2n - 2\delta + 4k - 4)
&\geq 4(2s - 2k + 2)^2 + (28k - 14\delta - 4s - 18)(2s - 2k + 2) - 2s^2 \\
&\quad + (6\delta + 6 - 8k)s + 40k^2 + (-40\delta - 60)k + 30\delta + 10\delta^2 + 20 \\
&= 6s^2 + (24k - 22\delta - 6)s + 2\delta - 12\delta k + 10\delta^2\\
&\eqqcolon z(s).
\end{align*}
The symmetry axis of $z(s)$ satisfies $\frac{6 + 22\delta - 24k}{12} < \frac{16\delta}{5} - 1$, so
\begin{align*}
    z(s)&= 6s^2 + (24k - 22\delta - 6)s + 2\delta - 12\delta k + 10\delta^2\\
    &\geq 6\left(\tfrac{16\delta}{5} - 1\right)^2 + (24k - 22\delta - 6)\left(\tfrac{16\delta}{5} - 1\right) + 2\delta - 12\delta k + 10\delta^2\\ 
    &= \tfrac{26}{25}\delta^2 + (\tfrac{324}{5}k - \tfrac{168}{5})\delta - 24k + 12 \\
    &\geq \tfrac{26}{25}(2k + 1)^2 + (\tfrac{324}{5}k -\tfrac{168}{5})(2k + 1) - 24k + 12 \\
    &= \tfrac{2}{25}(1672k^2 - 278k - 257) > 0.
    \end{align*}
Now suppose $\delta + 1 \leq s < \frac{16\delta}{5} - 1$. Then
\[
\frac{6\delta + 6 - 8k - 4n}{4} \leq \frac{(\delta + 1) + (\frac{16}{5}\delta - 1)}{2},
\]
and
\begin{align*}
&h(2n - 2\delta + 4k - 4) \\
&= -2s^2 + (6\delta + 6 - 8k - 4n)s + 4n^2 + (28k - 14\delta - 18)n \\
&\quad + 40k^2 + (-40\delta - 60)k + 30\delta + 10\delta^2 + 20 \\
&\geq -2(\tfrac{16\delta}{5} - 1)^2 + (6\delta + 6 - 8k - 4n)(\tfrac{16\delta}{5} - 1) + 4n^2 + (28k - 14\delta - 18)n \\
&\quad + 40k^2 + (-40\delta - 60)k + 30\delta + 10\delta^2 + 20 \\
&= 4n^2 + \left(28k - 14\delta - 4(\tfrac{16}{5}\delta - 1) - 18\right)n - 2(\tfrac{16}{5}\delta - 1)^2 \\
&\quad + (6\delta + 6 - 8k)(\tfrac{16}{5}\delta - 1) + 40k^2 + (-40\delta - 60)k + 30\delta + 10\delta^2 + 20 \\
&\eqqcolon r(s).
\end{align*}
The symmetry axis of $r(s)$ satisfies $\frac{28k - 14\delta - 4(\frac{16}{5}\delta - 1) - 18}{-8} < 6.5\delta$, so
\begin{align*}
r(s)&= 4n^2 + \left(28k - 14\delta - 4(\tfrac{16}{5}\delta - 1) - 18\right)n - 2(\tfrac{16}{5}\delta - 1)^2 \\
&\quad + (6\delta + 6 - 8k)(\tfrac{16}{5}\delta - 1) + 40k^2 + (-40\delta - 60)k + 30\delta + 10\delta^2 + 20 \\
&\geq 4(6.5\delta)^2 + \left(28k - 14\delta - 4(\tfrac{16}{5}\delta - 1) - 18\right)(6.5\delta) - 2(\tfrac{16}{5}\delta - 1)^2 \\
&\quad + (6\delta + 6 - 8k)(\tfrac{16}{5}\delta - 1) + 40k^2 + (-40\delta - 60)k + 30\delta + 10\delta^2 + 20 \\
&= 3.52\delta^2 + (116.4k - 35)\delta + 40k^2 - 52k + 12 \\
&> 40k^2 - 52k + 12 \geq 0.
\end{align*}
Therefore, $h(x) > 0$ for all $x \geq 2n - 2\delta + 4k - 4$, i.e., $f_{\pi, 1}(x) > f_3(x)$ for all $x \geq 2n - 2\delta + 4k - 4$.

Since $q(G_3) > 2n - 2\delta + 4k - 4$, it follows that $f_{\pi, 1}(x) > f_3(x) \geq 0$ for all $x \geq q(G_3)$. Consequently, $f_{\pi, 1}(x) = 0$ has no roots in $[q(G_3), +\infty)$, yielding $q(G_1) < q(G_3)$.
\begin{case}
$n = 2s - 2k + 1$.
\end{case}
In this case, $n = 2s - 2k +1 \geq 6.5\delta$. Then $G_{1} = K_{s} \vee (s - 2k + 1)K_{1}$ and $G_{3} = K_{\delta} \vee (K_{2s - 2\delta} \cup (\delta - 2k + 1)K_{1})$.

The equitable quotient matrix of $Q(G_2)$ with respect to the partition $V(G_3) = V(K_{\delta}) \cup V(K_{2s - 2\delta}) \cup (\delta - 2k + 1) V(K_{1})$ is
\[
B_3 = \begin{pmatrix}
        2s - 2k+\delta - 1 & 2s - 2\delta &  \delta  - 2k + 1\\
            \delta & 4s - 3\delta & 0\\
            \delta & 0 & \delta\\
      \end{pmatrix},
\]
with characteristic polynomial
\begin{align*}
f_3(x) &=x^3 + (\delta + 2k - 6s + 1)x^2 + (\delta - 4s + 6\delta k + 2\delta s - 8ks - 4\delta^2 + 8s^2)x\\
& - 2\delta^3 + 8\delta^2 s - 6\delta^2 - 8\delta s^2 + 8\delta s
\end{align*}
To compare $f_{\pi', 1}$ and $f_3$, define
\[
w(x) \coloneqq \frac{f_{\pi', 1}(x) - f_3(x)}{s - \delta}.
\]
we have
\begin{align*}
    w(x) = x^2 - (2s + 4\delta - 6k -1)x + 2s^2 + 2\delta^2 - 6s\delta -2s + 6\delta.
\end{align*}
The symmetry axis $x = \frac{2s + 4\delta - 6k -1}{2}$ is less than $4s - 2\delta - 2$, which implies that $w(x)$ is increasing on $[4s - 2\delta - 2, + \infty)$, in addition, we have $\tfrac{-24k + 34\delta + 10}{20} < \tfrac{6.5\delta + 2k - 1}{2} \leq s$.
Thus,
\begin{align*}
w(x) &> w(4s - 2\delta - 2) \\
     &=10s^2 + (24k - 34\delta - 10)s +20\delta - 12k - 12\delta k + 14\delta^2 + 2\\
     & \geq 10(\tfrac{6.5\delta + 2k - 1}{2})^2 + (- 22\delta - 10)(\tfrac{6.5\delta + 2k - 1}{2})+20\delta - 12k - 12\delta k + 14\delta^2 + 2\\
     &=\tfrac{73\delta^2}{8} + (98k - 28)\delta + 34k^2 - 44k + \tfrac{19}{2}>0
\end{align*}
Therefore, $f_{\pi', 1}(x) > f_3(x)$ for all $x \in [4s - 2\delta - 2, +\infty)$.

Since $q(G_3) > q(K_{2s - \delta}) = 4s - 2\delta - 2$, we have $f_{\pi', 1}(x) > f_3(x) \geq 0$ for all $x \geq q(G_3)$. Hence, $f_{\pi', 1}(x) = 0$ does not have roots in the interval $[q(G_3 ), + \infty)$, which implies $q(G_1) < q(G_3)$.

Together with the case $s = \delta$, we conclude that $q(G_1) \leq q(G_3)$, with equality if and only if $G_1 \cong G_3$.
\end{proof}

\begin{proof}[Proof of \cref{thm:q-ext}]
Suppose that $G$ is not fractionally $k$-extendable. By \cref{lem:k-ext-condition}, there exists a nonempty subset $S \subset V(G)$ with $|S| = s \geq 2k$ such that $i(G - S) \geq s - 2k + 1$. Then $n \geq i(G - S) + |S| \geq 2s - 2k + 1$, and $G$ is a spanning subgraph of $K_s \vee (K_{n - 2s + 2k - 1} \cup (s - 2k + 1)K_1)$. Let $\delta = \delta(G) \leq \delta(K_s \vee (K_{n - 2s + 2k - 1} \cup (s - 2k + 1)K_1)) = s$. By \cref{lem:q-monotonicity}, we conclude
\[
q(G) \leq q(K_s \vee (K_{n - 2s + 2k - 1} \cup (s - 2k + 1)K_1))
\]
with equality if and only if $G \cong K_s \vee (K_{n - 2s + 2k - 1} \cup (s - 2k + 1)K_1)$.

By \cref{lem:q1q2},
\[
q(G) \leq q(K_s \vee (K_{n - 2s + 2k - 1} \cup (s - 2k + 1)K_1)) \leq q(K_{2k} \vee (K_{n - 2k - 1}\cup K_1)),
\]
with equality if and only if $G \cong K_{2k} \vee (K_{n - 2k - 1} \cup K_1)$, a contradiction.

If $\delta \geq 2k + 1$, then by \cref{lem:q1q3},
\[
q(G) \leq q(K_s \vee (K_{n - 2s + 2k - 1} \cup (s - 2k + 1)K_1)) \leq q(K_{\delta} \vee (K_{n - 2\delta + 2k - 1} \cup (\delta - 2k + 1)K_1))
\]
with equality if and only if $G \cong K_{\delta} \vee (K_{n - 2\delta + 2k - 1} \cup (\delta - 2k + 1)K_1)$, again a contradiction.

This completes the proof.
\setcounter{case}{0}
\end{proof}

\section{Proof of \cref{thm:mu-ext}}
Let $G$ be a connected graph of order $n$ and minimum degree $\delta$. Suppose that $G$ is not fractional $k$-extendable. By \cref{lem:k-ext-condition}, there exists a nonempty subset $S \subset V(G)$ with $|S| \geq 2k$ such that $i(G - S) \geq |S| - 2k + 1$. Then $G$ is a spanning subgraph of $G_1 \coloneqq K_s \vee (K_{n_1} \cup (s - 2k + 1)K_1)$, where $s \coloneqq |S| \geq 2k$ and $n_1 \coloneqq n - 2s + 2k - 1 \geq 0$. Using \cref{lem:mu-monotonicity}, we conclude
\begin{align}\label{eq:5.1}
\mu(G) \geq \mu(G_1),
\end{align}
with equality if and only if $G \cong G_1$. Let $\delta \coloneqq \delta(G)$. Since $s = \delta(G_1) \geq \delta(G) = \delta$, we proceed by cases.

Let $G_3 \coloneqq K_{\delta} \vee (K_{n - 2\delta + 2k - 1} \cup (\delta - 2k + 1)K_1)$, where $n \geq 2\delta - 2k + 1$. If $s = \delta$, then $G_1 \cong G_3$, and so $\mu(G_1) = \mu(G_3)$. In what follows, we consider the case where $s \geq \delta + 1$. 

\begin{case}
$n \geq 2s - 2k + 2$.
\end{case}
Consider the partition $V(G_1) = V(K_s) \cup V(K_{n - 2s + 2k - 1}) \cup V((s - 2k + 1)K_1)$, the equitable quotient matrix of $\mathcal{D}(G_{1})$ is
\[
B_1 =
\begin{pmatrix}
    s - 1 & n - 2s + 2k - 1 & s - 2k + 1 \\
    s & n - 2s + 2k - 2 & 2(s - 2k + 1) \\
    s & 2(n - 2s + 2k - 1) & 2(s - 2k)
\end{pmatrix}.
\]
The characteristic polynomial of $B_1$ is
\begin{align*}
\varphi_{B_1}(x) &= x^3 + (2k - n - s + 3)x^2 \\
&\quad + (5s^2 - 14ks - 2ns + 8k^2 + 4kn + 6s - 6k - 5n + 6)x \\
&\quad + (-2s^3 + 6ks^2 + ns^2 + 2s^2- 4k^2 s - 2kns - 10ks - ns + 6s + 8k^2 + 4kn - 8k - 4n + 4).
\end{align*}
By \cref{lem:equitable-quotient}, $\mu(G_1)$ equals the largest root of $\varphi_{B_1}(x) = 0$. 

Consider the partition $V(G_3) = V(K_\delta) \cup V(K_{n - 2\delta + 2k - 1}) \cup V((\delta - 2k + 1)K_1)$. The equitable quotient matrix of $\mathcal{D}(G_3)$ is
\[
B_3 =
\begin{pmatrix}
    \delta - 1 & n - 2\delta + 2k - 1 & \delta - 2k + 1 \\
    \delta & n - 2\delta + 2k - 2 & 2(\delta - 2k + 1) \\
    \delta & 2(n - 2\delta + 2k - 1) & 2(\delta - 2k)
\end{pmatrix}.
\]
Let $\varphi_{B_3}(x)$ denote the characteristic polynomial of $B_3$:
\begin{align*}
\varphi_{B_3}(x) &= x^3 + (2k - n - \delta + 3)x^2 \\
&\quad + (5\delta^2 - 14k\delta - 2n\delta + 8k^2 + 4kn + 6\delta - 6k - 5n + 6)x \\
&\quad + (-2\delta^3  + 6k\delta^2 + n\delta^2 + 2\delta^2 - 4k^2\delta - 2kn\delta - 10k\delta - n\delta + 6\delta + 8k^2 + 4kn - 8k - 4n + 4).
\end{align*}
By \cref{lem:equitable-quotient}, $\mu(G_3)$ equals the largest root of $\varphi_{B_3}(x) = 0$. By \cref{lem:Wiener-bound},
\begin{align*}
\mu(G_3) & \geq \frac{2W(G_{3})}{n} \\
         & = \frac{2}{n}\left(\binom{n - \delta + 2k - 1}{2} + 2\binom{\delta - 2k + 1}{2} + \delta(\delta - 2k + 1) + 2(n - 2\delta + 2k - 1)(\delta - 2k + 1)\right) \\
         & = n - \delta + 2k  - 2 +  \frac{(\delta - 2k + 1)(3n - 3\delta + 2k - 2)}{n} \\
         & \geq n - \delta + 2k  - 2 + \frac{2(3n - 0.5n)}{n} \qquad (\text{since $n \geq 6\delta$, $\delta \geq 2k + 1$ and $k \geq 1$})\\
         & = n - \delta + 2k + 3.
\end{align*}
Define $\varphi_{B_3}(x) - \varphi_{B_1}(x) = (s - \delta)h(x)$. A straightforward computation yields
\begin{align*}
h(x) &= x^2 + (14k - 5\delta + 2n - 5s - 6)x \\
&\quad + (2\delta^2 + 2\delta s + 2s^2 - (6k + n + 2)\delta - (6k + n + 2)s + 4k^2 + 2kn + 10k + n - 6).
\end{align*}
The axis of symmetry of $h(x)$ is $x_0 = -\frac{14k - 5\delta + 2n - 5s - 6}{2}$. Note that
\[
x_0 \leq n - \delta + 2k + 3 \quad \Leftrightarrow \quad n \geq \frac{7\delta - 18k + 5s}{4} = \frac{2.5(2s - 2k + 2)}{4} + \frac{7\delta - 13k - 5}{4}.
\]
Consequently, conditions $n \geq 5\delta$ and $n \geq 2s - 2k + 2$ are sufficient. Thus, $h(x)$ is increasing for $x \geq n - \delta + 2k + 3$. Consequently,
\begin{align*}
h(x) &\geq h(n - \delta + 2k + 3) \\
&= 3n^2 + (24k - 10\delta - 6s + 7)n \\
&\quad + 8\delta^2 - 34\delta k + 7\delta s - 17\delta + 36k^2 - 16ks + 52k + 2s^2 - 17s - 15 \\
&\eqqcolon w(n).
\end{align*}

\begin{subcase}
$ \delta + 1 \leq s \leq 6\delta - 10k$.
\end{subcase}

For $\delta + 1 \leq s \leq 6\delta - 10k$, we bound $w(n)$ from below:
\begin{align*}
w(n) &= 3n^2 + (24k - 10\delta - 6s + 7)n \\
&\quad + 8\delta^2 - 34\delta k + 7\delta s - 17\delta + 36k^2 - 16ks + 52k + 2s^2 - 17s - 15 \\
&= 2s^2 + (-6n + 7\delta - 16k - 17)s + (3n^2 + (24k - 10\delta + 7)n + 8\delta^2 - 34\delta k - 17\delta + 36k^2 + 52k - 15) \\
&\geq 2(6\delta - 10k)^2 + (-6n + 7\delta - 16k - 17)(6\delta - 10k) \\
&\quad + 3n^2 + (24k - 10\delta + 7)n + 8\delta^2 - 34\delta k - 17\delta + 36k^2 + 52k - 15 \\
&= 3n^2 - (46\delta - 84k - 7)n + 122\delta^2 -440\delta k - 119\delta + 396k^2 + 222k - 15 \\
&\eqqcolon h(n).
\end{align*}
The axis of symmetry of $h(n)$ is $n_0 = \frac{46\delta - 84k - 7}{6}$. Since $n \geq 12\delta - 2k + 1$, we have $n_0 \leq 12\delta - 2k + 1$, so $h(n)$ is increasing for $n \geq 12\delta - 2k + 1$. Thus,
\begin{align*}
h(n) &\geq h(12\delta - 2k + 1) \\
&= 3(12\delta - 2k + 1)^2 - (46\delta - 84k - 7)(12\delta - 2k + 1) + 122\delta^2 -440\delta k - 119\delta + 396k^2 + 222k - 15 \\
&= 2\delta^2 + (516k - 9)\delta + 240k^2 + 280k - 5 \\
&> 0.
\end{align*}

\begin{subcase}
$s \geq  6\delta - 10k + 1$.
\end{subcase}

Recall that
\[
w(n) = 3n^2 + (24k - 10\delta - 6s + 7)n + 8\delta^2 - 34\delta k + 7\delta s - 17\delta + 36k^2 - 16ks + 52k + 2s^2 - 17s - 15.
\]
The axis of symmetry of $w(n)$ is $n_0 = -\frac{24k - 10\delta - 6s + 7}{6}$. Given $s \geq  6\delta - 10k + 1  \geq \frac{10\delta - 12k - 19}{6}$, we have $n_0 \leq 2s - 2k + 2$, so $w(n)$ is increasing for $n \geq 2s - 2k + 2$. Thus,
\begin{align*}
w(n) &\geq w(2s - 2k + 2) \\
&= 2s^2 + (20k - 13\delta + 9)s + 8\delta^2 - 14\delta k + 62k - 37\delta + 11 \\
&\eqqcolon g(s).
\end{align*}
The axis of symmetry of $g(s)$ is $s_0 = -\frac{20k - 13\delta + 9}{4} \leq 6\delta - 10k + 1$, so
\begin{align*}
g(s) &\geq g(6\delta - 10k + 1) \\
&= 2(6\delta - 10k + 1)^2 + (20k - 13\delta + 9)(6\delta - 10k + 1) + 8\delta^2 - 14\delta k + 62k - 37\delta + 11 \\
&= 2\delta^2 - (4k - 28)\delta - 48k + 22 \\
&\geq 2(2k + 1)^2 - (4k - 28)(2k + 1) - 48k + 22 \quad\text{(since $\delta \geq 2k + 1$)}\\
&= 12k + 52 > 0.
\end{align*}

\begin{case}
$n = 2s - 2k + 1$.
\end{case}

The graph $G_3$ becomes
\[
G_3 = K_{\delta} \vee \left( K_{2s - 2\delta} \cup (\delta - 2k + 1)K_1 \right).
\]
With respect to the partition 
\[
V(G_3) = V(K_\delta) \cup V(K_{2s-2\delta}) \cup V((\delta - 2k + 1)K_1),
\]
the equitable quotient matrix of $\mathcal{D}(G_{3})$ is
\[
B_3 =
\begin{pmatrix}
    \delta - 1 & 2s - 2\delta & \delta - 2k + 1 \\
    \delta & 2s - 2\delta - 1 & 2(\delta - 2k + 1) \\
    \delta & 2(2s - 2\delta) & 2(\delta - 2k)
\end{pmatrix}.
\]
Let $\varphi_{B_3}(x)$ denote the characteristic polynomial of $B_3$:
\begin{align*}
\varphi_{B_3}(x) &= x^3 + (4k - \delta - 2s + 2)x^2 \\
&\quad + (4\delta + 8k - 10s - 10\delta k - 4\delta s + 8ks + 5\delta^2 + 1)x \\
&\quad + (5\delta + 4k - 8s - 10\delta k - 2\delta s + 8ks + 4\delta^2k + 2\delta^2s + 3\delta^2 - 2\delta^3 - 4\delta ks).
\end{align*}
For $n = 2s - 2k + 1$,
\begin{equation*}
\varphi_{B_1}(x) = x^3 + (4k - 3s + 3)x^2 + (12k - 9s - 2ks + s^2 + 2)x + 8k - 6s - 4ks + 2s^2.
\end{equation*}
Define $f(x) \coloneqq \varphi_{B_3}(x) - \varphi_{B_1}(x)$. A straightforward calculation yields the following:
\begin{align*}
f(x) & =(s - \delta - 1)x^2 + (4\delta - 4k - s - 10\delta k - 4\delta s + 10ks + 5\delta^2 - s^2 - 1)x\\
&+ 5\delta - 4k - 2s - 10\delta k - 2\delta s + 12ks + 4\delta^2k + 2\delta^2s + 3\delta^2 - 2\delta^3 - 2s^2 - 4\delta ks.
\end{align*}
The symmetry axis of $f(x)$ is
\[
x_0 = \tfrac{4\delta - 4k - s - 10\delta k - 4\delta s + 10ks + 5\delta^2 - s^2 - 1}{-2(s - \delta - 1)}.
\]
Since $n = 2s - 2k + 1$ and $n \geq 12\delta - 2k + 1$, we have $s \geq 6\delta$.

We claim that $x_0 \leq 2s - \delta + 4$.
\begin{proof}[Proof of the Claim]
We aim to prove $x_0 \leq 2s - \delta + 4$, which is equivalent to
\[
4\delta - 4k - s - 10\delta k - 4\delta s + 10ks + 5\delta^2 - s^2 - 1 \geq (2s - \delta + 4)(-2s + 2\delta + 2).
\]
Simplifying the right-hand side:
\[
(2s - \delta + 4)(-2s + 2\delta + 2) = -4s^2 + (6\delta - 4)s - 2\delta^2 + 6\delta + 8,
\]
we obtain the equivalent inequality:
\[
3s^2 - (10\delta - 10k - 3)s - 10k\delta + 7\delta^2 - 2\delta - 4k - 9 \geq 0.
\]
Define the quadratic function:
\[
h(s) = 3s^2 - (10\delta - 10k - 3)s - 10k\delta + 7\delta^2 - 2\delta - 4k - 9.
\]
The axis of symmetry of $ h(s) $ is
\[
s_0 = \tfrac{10\delta - 10k - 3}{6}.
\]
Thus,
\[
s_0 = \tfrac{10\delta - 10k - 3}{6} \leq 6\delta,
\]
as verified by:
\[
\tfrac{10\delta - 10k - 3}{6} \leq 6\delta \quad \Leftrightarrow \quad 10\delta - 10k - 3 \leq 36\delta \quad \Leftrightarrow \quad 26\delta + 10k + 3 \geq 0.
\]
Therefore, for $s \geq 6\delta$, the function $ h(s) $ is increasing, and we have
\begin{align*}
h(s) &\geq h(6\delta) \\
     &= 3(6\delta)^2 - (10\delta - 10k - 3)(6\delta) - 10k\delta + 7\delta^2 - 2\delta - 4k - 9 \\
     &= 55\delta^2 + (50k + 16)\delta - 4k - 9 \\
     &= 55\delta^2 + (50k\delta - 4k) + (16\delta - 9) > 0.
\end{align*}
This completes the proof of the claim.
\end{proof}
Now consider $f(x)$ for $ x \geq 2s - \delta + 4$. Since $ f(x)$ is a quadratic function with positive leading coefficient $ (s - \delta - 1 > 0) $ and axis of symmetry $ x_0 \leq 2s - \delta + 4$, it is increasing on $ [2s - \delta + 4, \infty) $. Therefore,
\begin{align*}
f(x) &\geq f(2s - \delta + 4) \\
     &= 2s^3 + (20k - 15\delta + 4)s^2 + (21\delta^2 + 44k - 34\delta k - 29\delta - 8)s \\
     &\quad - 8\delta^3 + 26\delta^2 + 14\delta - 20k - 46\delta k + 14\delta^2 k - 20 \\
     &\eqqcolon g(s).
\end{align*}
We now show that $g(s) > 0$ for $s \geq 6\delta$. Differentiating $g(s)$ with respect to $s$:
\[
g'(s) = 6s^2 + (40k - 30\delta + 8)s + 21\delta^2 + 44k - 34\delta k - 29\delta - 8.
\]
The axis of symmetry of $g'(s)$ is
\[
s_0' = \tfrac{-40k + 30\delta - 8}{12}.
\]
Thus,
\[
s_0' = \tfrac{-40k + 30\delta - 8}{12} \leq 6\delta,
\]
as verified by:
\[
\tfrac{-40k + 30\delta - 8}{12} \leq 6\delta \quad \Leftrightarrow \quad -40k + 30\delta - 8 \leq 72\delta \quad \Leftrightarrow \quad  42\delta + 40k + 8 \geq 0.
\]
Therefore, for $s \geq 6\delta$, the function $g'(s)$ is increasing, and we have
\begin{align*}
g'(s) &\geq g'(6\delta) \\
      &= 6(6\delta)^2 + (40k - 30\delta + 8)(6\delta) + 21\delta^2 + 44k - 34\delta k - 29\delta - 8 \\
      &= 57\delta^2 + (206k + 19)\delta + 44k - 8 > 0.
\end{align*}
Hence, $ g(s) $ is strictly increasing for $s \geq 6\delta$, and
\begin{align*}
g(s) &\geq g(6\delta) \\
     &= 2(6\delta)^3 + (20k - 15\delta + 4)(6\delta)^2 + (21\delta^2 + 44k - 34\delta k - 29\delta - 8)(6\delta) \\
     &\quad - 8\delta^3 + 26\delta^2 + 14\delta - 20k - 46\delta k + 14\delta^2 k - 20 \\
     &= (10\delta^3 - 20k - 20) + (530k - 4)\delta^2 + (218k - 34)\delta \\
     &> 0 \quad \text{(since $\delta \geq 2k + 1$)}.
\end{align*}
Therefore, $f(x) \geq g(s) > 0$ for all $x \geq 2s - \delta + 4$, which implies $\varphi_{B_3}(x) > \varphi_{B_1}(x)$ for all $x \geq 2s - \delta + 4$. Since $\mu(G_3) \geq n - \delta + 2k + 3 = 2s - \delta + 4$, we conclude that $\mu(G_1) > \mu(G_3)$.

In all cases, we have $h(x) > 0$ when $x \geq n - \delta + 2k + 3$. Hence, $\varphi_{B_3}(x) - \varphi_{B_1}(x) > 0$ for $x \geq n - \delta + 2k + 3$, implying $\mu(G_1) > \mu(G_3)$. Combined with the inequalities \eqref{eq:5.1}, this yields $\mu(G) > \mu(G_3) = \mu(K_\delta \vee (K_{n - 2\delta + 2k - 1} \cup (\delta - 2k + 1)K_1))$, a contradiction.

\end{document}